\numberwithin{equation}{section}
\newtheorem{theorem}{Theorem}[section]
\newtheorem{lemma}[theorem]{Lemma}
\newtheorem{remark}[theorem]{Remark}
\newtheorem{corollary}[theorem]{Corollary}
\newtheorem{proposition}[theorem]{Proposition}
\newcommand{\diver}{\mathop{\rm div}}
\begin{document}

\title[Maximal existence domains]{Maximal existence domains\\  
of positive solutions\\
for two-parametric systems of elliptic equations}

\author[Vladimir Bobkov, Yavdat Il'yasov]
{Vladimir Bobkov, Yavdat Il'yasov} 

\address{Yavdat Il'yasov \newline
Institute of Mathematics of RAS,
Ufa, Russia}
\email{ilyasov02@gmail.com}

\address{Vladimir Bobkov \newline
Institute of Mathematics of RAS,
Ufa, Russia \newline
}
\email{bobkovve@gmail.com}

\thanks{The first author was 
partly supported by grant RFBR 
13-01-00294-p-a. The second author was 
partly supported by grant RFBR
14-01-00736-p-a}
\subjclass[2000]{35J50, 35J55, 35J60, 35J70, 35R05}
\keywords{System of elliptic equations; p-laplacian; indefinite nonlinearity; Nehari manifold; fibering method}

\begin{abstract}
The paper is devoted to the study of two-parametric  families of Dirichlet problems for systems of equations with $p, q$-Laplacians and  indefinite nonlinearities. 
Continuous and monotone curves $\Gamma_f$ and $\Gamma_e$ on the parametric plane $\lambda\times \mu$, which are the lower and upper bounds for a maximal domain of existence of weak positive solutions are introduced.
The curve $\Gamma_f$ is obtained by developing our previous work \cite{BobkovIlyasov} and it determines a maximal domain of the applicability of the Nehari manifold and fibering methods.
The curve $\Gamma_e$ is derived explicitly via minimax variational principle of the extended functional method.
\end{abstract}

\maketitle

\section{Introduction}
We consider the Dirichlet problem for system of equations
\begin{equation}
\tag{$\mathcal{D}$}
\label{D1}
\left\{
\begin{aligned}
  -\Delta_p u  &= \lambda |u|^{p-2} u + c_1 f(x) |u|^{\alpha-2} |v|^{\beta}  u , &&x \in \Omega, \\[0.4em]
  -\Delta_q v &= \mu |v|^{q-2} v + c_2 f(x) |u|^{\alpha} |v|^{\beta-2} v, &&x \in \Omega, \\[0.4em]
	\left. u \right|_{\partial \Omega} &=  \left. v \right|_{\partial \Omega} = 0,
\end{aligned} \right.
\end{equation}
where  $\Omega$ is a bounded domain in  $\mathbb{R}^n$, $n \geq 1$, with the boundary $\partial \Omega$ which is
$C^{1,\delta}$-manifold, $\delta \in (0, 1)$; parameters $\lambda, \mu \in \mathbb{R}$, $c_1, c_2 > 0$; $p, q > 1$ and $\alpha,~ \beta \geq 1$; the function $f \in L^\infty(\Omega)$ and possibly changes the sign.

The $p$- and $q$-Laplacians in \eqref{D1} are the special cases of divergence-form operator $\diver (a(x, \nabla u))$, which appears in many nonlinear diffusion problems (cf. \cite{diaz, yin} for a discussion of some physical backgrounds for elliptic systems with $p$-Laplacians). 
The main feature of the problem \eqref{D1} is that the nonlinearity on the right-hand side has a priori indefinite sign. Problems with such nonlinearities possess complicated and interesting geometrical structure of branches of solutions, for example, the blow-up behavior of branches at critical values of parameters, existence of turning points, etc. (see, e.g. \cite{Alam, ilIZV, Ou2}).
 
In the present article, we study a \textit{maximal existence domain} of nonnegative (positive) solutions to \eqref{D1}. 
By the maximal existence domain we mean a set in the $\lambda \times \mu$-plane for which \eqref{D1} possesses weak nonnegative (positive) solutions, whereas there are no such solutions in its complement. 
The problem of finding and description domains with such extremal properties, as well as their precise boundaries, is fundamental in the theoretical investigation of  parametric problems and it naturally arises in various  applications. 
Some results in this direction for the system \eqref{D1} can be found, for example, in \cite{Bozhkov, yang2008, BobkovIlyasov}. 
In particular, the existence of a nonnegative solution in quadrant $\{\lambda < \lambda_1\} \times \{\mu<\mu_1\}$ and in a neighbourhood of $(\lambda_1, \mu_1)$ in quadrant $\{\lambda > \lambda_1\} \times \{\mu > \mu_1\}$ had been obtained in \cite{Bozhkov,yang2008}.
Here $\lambda_1$ and $\mu_1$ denote the first Dirichlet eigenvalues  of $-\Delta_p$ and $-\Delta_q$ in $\Omega$, respectively.
Furthermore, in \cite{BobkovIlyasov} it was introduced explicitly a maximal value $\sigma^*$ of applicability of the Nehari manifold and fibering methods on a ray $(\lambda_1 \sigma, \mu_1 \sigma)$, $\sigma \geq 1$, which defines a nonlocal domain $(\lambda_1, \sigma^* \lambda_1) \times (\mu_1, \sigma^* \mu_1)$ of existence of nonnegative solutions to \eqref{D1}.

We are concerned with the investigation of the maximal existence domain of nonnegative solutions for the problem \eqref{D1} and its boundary in quadrant $\{\lambda > 0\} \times \{\mu > 0\}$. The part of this boundary can be described as a parametrized set $\Gamma = \left(\lambda^*(r), \mu^*(r) \right)$, $r>0$, where an extremal point $(\lambda^*(r), \mu^*(r))$ can be defined implicitly as follows:
\begin{equation}
\label{lsol}
\lambda^*(r) := \sup \left\{ \lambda \geq 0: \eqref{D1} ~\text{ with }~ (\lambda, \mu) = (\lambda, \lambda \, r) ~\text{ has a nonnegative solution} \, \right\}, \quad \mu^*(r) := \lambda^*(r) \, r.
\end{equation}
Our goal is to construct a lower estimate $\Gamma_f = (\lambda^*_f(r), \mu^*_f(r))$ and an upper estimate $\Gamma_e = (\lambda^*_e(r), \mu^*_e(r))$ of $\Gamma$, such that $\lambda^*_f(r) \leq \lambda^*(r) \leq \lambda^*_e(r)$ and $\mu^*_f(r) \leq \mu^*(r) \leq \mu^*_e(r)$. To shorten notation, we write $\Gamma_f \leq \Gamma \leq \Gamma_e$ in this case.

To find the lower estimate we develop the results obtained in \cite{BobkovIlyasov}
and define the curve $\Gamma_f$ by explicit variational formulas. 
These formulas allow us to show that $\Gamma_f$ is a continuous monotone curve and it allocates a maximal domain in quadrant $\{\lambda >\lambda_1 \} \times \{\mu > \mu_1 \}$ where the fibering and Nehari manifold methods are applicable  to  \eqref{D1}. 
The upper estimate $\Gamma_e$ for the maximal existence domain of positive solutions is obtained by applying the extended functional method \cite{ilfunc} to \eqref{D1}. 
Using this approach we describe $\Gamma_e$ explicitly via minimax variational principle and prove that it is also a continuous monotone curve. 

It is noteworthy that the variational principles used to describe these critical curves make it possible to approximate them numerically (see, e.g., \cite{IvaIly, LefWei}). Furthermore, we suppose that the obtained explicit formulas may be useful in the investigation of the nonstationary problems corresponding to \eqref{D1} (cf. \cite{IlD}) and problems with supercritical nonlinearities (see, e.g., \cite{ilrunSup}). 

The paper is organized as follows. 
In Section 2, we present the main results of the paper.
Section 3 deals with the lower bound curve $\Gamma_f$.
In Section 4, we treat the upper bound curve $\Gamma_e$.
Appendices contain some technical statements and regularity results.

\section{Main results}

Let us introduce some notations. We denote
$\Omega^+ :=\{x\in \Omega: f(x)> 0\}$ and $\Omega^0 :=\{x\in \Omega: f(x)=0\}$.
By $\nu(U)$ we mean the Lebesgue measure of a set $U \subset \mathbb{R}^n$, and say that  $U$ has nonempty interior a.e. if it contains an open subset, after redefinition on a set of measure zero. 
The standard Sobolev spaces $W_0^{1,p} := W_0^{1,p}(\Omega)$ and $W_0^{1,q} := W_0^{1,q}(\Omega)$ are equipped with the norms
$$
||u||_p := \left(\int_\Omega |\nabla u|^p \, dx\right)^{1/p},  \quad ||v||_q := \left(\int_\Omega |\nabla v|^q \, dx\right)^{1/q}.
$$
By $p^*$ and $q^*$ we denote the critical exponents of  $W^{1,p}$ and $W^{1,q}$; $(\lambda_1, \varphi_1)$ and $(\mu_1, \psi_1)$ stand for the first eigenpairs of the operators $-\Delta_p$ and $-\Delta_q$ in $\Omega$ with the zero Dirichlet data, respectively. 
It is known that $\lambda_1$, $\mu_1$ are positive, simple and isolated, and $\varphi_1$,  $\psi_1 \in C^{1, \delta}(\overline{\Omega})$, $\delta \in (0,1)$ are positive \cite{Anan}.

We call  $(u,v) \in  W_0^{1,p} \times W_0^{1,q}$  a \textit{weak solution (sub-, supersolution)} of the problem \eqref{D1} if $f |u|^{\alpha-2} |v|^{\beta}  u \in W^{-1,p'}(\Omega)$, $f |u|^{\alpha} |v|^{\beta-2}  v \in W^{-1,q'}(\Omega)$ and for all nonnegative $(\xi, \eta) \in  W_0^{1,p} \times W_0^{1,q}$ 
\begin{equation}
\label{defin}
  \begin{aligned}
\int_\Omega |\nabla u|^{p-2} \nabla u \nabla \xi \,dx - \lambda \int_\Omega |u|^{p-2} u \xi \,dx - c_1 \int_\Omega f |u|^{\alpha-2} |v|^{\beta} u \xi \,dx &=  0, \quad &(\leq, \geq)\\
\int_\Omega |\nabla v|^{q-2} \nabla v \nabla \eta \,dx- 
\mu \int_\Omega |v|^{q-2} v \eta \,dx - c_2 \int_\Omega f |u|^{\alpha} |v|^{\beta-2} v \eta \,dx &=  0. \quad &(\leq, \geq)
 \end{aligned}
\end{equation}
Here $p'$ and $q'$ are conjugate exponents to $p$ and $q$, respectively.
Note that due to the standard approximation theory, it is sufficient to consider only nonnegative $\xi, \eta \in C^1_0(\Omega)$.
We say that a weak solution $(u, v)$ of \eqref{D1} is \textit{$C^1$-solution} if $u, v \in C^1(\overline{\Omega})$.
Due to regularity results (see Lemma \ref{lem:l} and Corollary \ref{rem:c}), any weak solution $(u, v)$ of \eqref{D1} is $C^1$-solution, provided $\alpha, \beta \geq 1$ and $\frac{\alpha}{p^*} + \frac{\beta}{q^*} < 1$.
We say that a weak solution $(u, v)$ is \textit{positive}, whenever $u,v > 0$ a.e. in  $\Omega$. If $\alpha \geq p$, $\beta \geq q$ and $\frac{\alpha}{p^*} + \frac{\beta}{q^*} < 1$, then Lemma \ref{lem:l} and Corollary \ref{rem:>0} imply that any nontrivial nonnegative solution is positive.

Notice also that the pairs $(\varphi_1, 0)$ and $(0, \psi_1)$ are semi-trivial solutions of \eqref{D1}, when $(\lambda, \mu)$ belongs to the lines $\lambda_1 \times \mathbb{R}$ and $\mathbb{R} \times \mu_1$, respectively.
In what follows, solution $(u, v)$ of \eqref{D1} with $u, v \not\equiv 0$ in $\Omega$ will  be called \textit{nontrivial}.

Let us introduce the maximal existence domain of nonnegative solutions for \eqref{D1} in quadrant $\{\lambda >\lambda_1 \} \times \{\mu > \mu_1 \}$:
\begin{align*}
\Sigma^* := \bigcup_{r > 0} \left\{ (\lambda, \mu) \in \mathbb{R}^2:  ~\lambda_1 < \lambda < \lambda^*(r), ~ \mu_1 < \mu < \mu^*(r) \right\}.
\end{align*}

\subsection{Lower bound for \textnormal{(\ref{lsol})} in quadrant \texorpdfstring{$\{\lambda >\lambda_1 \} \times \{\mu > \mu_1 \}$}{lambda>lambda1,mu>mu1}}
%The second part of this article, Section 6, is devoted to clarifying Lemma \ref{L4}. 
In this part, we use the assumptions
\begin{equation}
\label{Sob}
\alpha,~ \beta \geq 1, \qquad \frac{\alpha}{p} +\frac{\beta}{q} > 1, \qquad \frac{\alpha}{p^*} +\frac{\beta}{q^*} < 1,	
\end{equation}

To obtain the lower estimate for \eqref{lsol} in quadrant $\{\lambda >\lambda_1 \} \times \{\mu > \mu_1 \}$ we introduce the set of critical points
\begin{equation}\label{sf}
	\lambda^*_f(r) := \inf_{(u,v) \in W} \left\{ \max \left\{ \frac{\int_\Omega |\nabla u|^p \, dx}{\int_\Omega |u|^p \, dx},~  \frac{1}{r} \frac{\int_\Omega |\nabla v|^q \, dx}{\int_\Omega |v|^q \, dx} \right\}: F(u,v) \geq 0 \right\}, \quad \mu^*_f(r) := \lambda^*_f(r) \, r,
\end{equation}
parametrized by $r > 0$. Here $W:= W^{1,p}_0 \times W^{1,q}_0 \setminus \{ (0, 0) \}$ and
$$
F(u,v) := \int_\Omega f(x)|u|^{\alpha}|v|^{\beta}\, dx.
$$
The family \eqref{sf} generalizes the single critical value $\sigma^*$ obtained in \cite{BobkovIlyasov}, for which we have $\sigma^* = \frac{1}{\lambda_1}\lambda^*_f\left(\frac{\mu_1}{\lambda_1}\right)$.
The family \eqref{sf} forms a curve $\Gamma_f(r) = (\lambda^*_f(r), \mu^*_f(r))$, $r>0$, which  allocates the following set in quadrant $\{\lambda >\lambda_1 \} \times \{\mu > \mu_1 \}$:
\begin{align*}
\Sigma^*_f := \bigcup_{r > 0} \left\{ (\lambda, \mu) \in \mathbb{R}^2:  ~\lambda_1 < \lambda < \lambda^*_f(r), ~ \mu_1 < \mu < \mu^*_f(r) \right\}.
\end{align*}
The main results on $\Gamma_f$ and $\Sigma^*_f$ are contained  in the following theorem.
\begin{theorem}
\label{Th3}
Assume \eqref{Sob} is satisfied.
Then
\begin{enumerate}
\item $(\lambda_1, \mu_1) \leq \Gamma_f(r) < +\infty$ for all $r \in (0, +\infty)$.
\item $\Gamma_f(r)$ is continuous for all $r \in (0, +\infty)$.
\item There exists $r \in (0, +\infty)$ such that $\Gamma_f(r) > (\lambda_1, \mu_1)$ if and only if $F(\varphi_1, \psi_1) < 0$;
\item $\lambda^*_f(r)$ is non-increasing and $\mu^*_f(r)$ is non-decreasing on $(0, +\infty)$.
\item for any $(\lambda, \mu) \in \Sigma^*_f$ there exists a nonnegative $C^1$-solution of \eqref{D1}.
\end{enumerate}
\end{theorem}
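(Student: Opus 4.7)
The plan is to handle (1)--(4) by direct manipulation of the infimum (\ref{sf}) and to tackle (5) via the Nehari/fibering machinery of \cite{BobkovIlyasov} tied to this same minimax value.

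For (1), the Rayleigh characterizations of $\lambda_1,\mu_1$ bound each ratio in the $\max$ by $\lambda_1$ (resp.\ $\mu_1/r$), giving $\lambda^*_f(r)\geq\lambda_1$ and $\mu^*_f(r)=r\lambda^*_f(r)\geq\mu_1$; finiteness follows from any admissible test pair (e.g., one supported in $\Omega^+$ if $\nu(\Omega^+)>0$, or else $F\equiv 0$ and any pair works). Part (4) is read off from
$$\mu^*_f(r)=\inf_{(u,v)\in W}\max\left\{r\,\frac{\|\nabla u\|_p^p}{\|u\|_p^p},\ \frac{\|\nabla v\|_q^q}{\|v\|_q^q}\right\}\!,$$
whose integrand is non-decreasing in $r$, and analogously for $\lambda^*_f(r)$. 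For (2), the infimum of continuous-in-$r$ functions is upper semicontinuous, and a monotone u.s.c.\ function has no jumps. For (3), if $F(\varphi_1,\psi_1)\geq 0$, the admissible pair $(\varphi_1,s\psi_1)$ yields $\lambda^*_f(r)\leq\max\{\lambda_1,\mu_1/r\}$ and $\mu^*_f(r)\leq\max\{r\lambda_1,\mu_1\}$; combined with (1) and (4), this forces $\lambda^*_f(r)=\lambda_1$ for $r\geq\mu_1/\lambda_1$ and $\mu^*_f(r)=\mu_1$ for $r\leq\mu_1/\lambda_1$, so $\Gamma_f(r)\not>(\lambda_1,\mu_1)$ for any $r$. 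Conversely, if $F(\varphi_1,\psi_1)<0$, test $r_0=\mu_1/\lambda_1$ and assume $\lambda^*_f(r_0)=\lambda_1$; any minimizing sequence normalized by $\|u_n\|_p=\|v_n\|_q=1$ has both Rayleigh quotients converging to the eigenvalue minima, so by simplicity of the first eigenvalues and Rellich--Kondrachov, $(u_n,v_n)\to(\varphi_1,\psi_1)$ in $L^p\times L^q$ up to sign. Subcriticality $\alpha/p^*+\beta/q^*<1$ from (\ref{Sob}) makes $F$ continuous along this mode of convergence, contradicting $F(u_n,v_n)\geq 0$. Hence $\lambda^*_f(r_0)>\lambda_1$ and $\mu^*_f(r_0)=r_0\lambda^*_f(r_0)>\mu_1$.

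For (5), given $(\lambda,\mu)\in\Sigma^*_f$, pick $r>0$ with $\lambda<\lambda^*_f(r)$ and $\mu<\mu^*_f(r)$. Work with the natural energy $E(u,v)$ for the system and fiber as $(tu,sv)$ with $t,s>0$; the Nehari set is defined by $\partial_t E=\partial_s E=0$. The minimax formula (\ref{sf}) translates directly into the geometric statement that the ``doubly negative cone'' $\{\|\nabla u\|_p^p<\lambda\|u\|_p^p,\ \|\nabla v\|_q^q<\mu\|v\|_q^q\}$ is disjoint from $\{F\geq 0\}$; this disjointness is exactly what is needed to solve the fiber equations uniquely in the positive quadrant for every admissible $(u,v)$, obtain a nonempty Nehari set of the correct signature, and ensure strict negativity of $E$ at its fiber-critical points. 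Subcriticality $\alpha/p^*+\beta/q^*<1$ yields weak-to-strong compactness of the $F$-term, so a standard Nehari-minimization (or Lagrange-multiplier/deformation) argument produces a weak solution of \eqref{D1}. Replacing $(u,v)$ by $(|u|,|v|)$ preserves both $E$ and the Nehari constraint, yielding nonnegativity, and Lemma \ref{lem:l} with Corollary \ref{rem:c} upgrade the weak solution to $C^1(\overline\Omega)$.

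The chief obstacle is (5): matching the minimax formula (\ref{sf}) to non-degeneracy of the fibered energy landscape, i.e., showing that $(\lambda,\mu)\in\Sigma^*_f$ is exactly the condition making the Nehari set nonempty, a $C^1$-manifold of the right Morse-type signature, and that minimizing sequences on it do not escape to the degenerate boundary where $t=0$ or $s=0$. Everything else unfolds from (\ref{sf}) with only minor analytical care.
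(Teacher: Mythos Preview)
Your approach matches the paper's: parts (1)--(4) by direct manipulation of the infimum \eqref{sf}, and part (5) via the Nehari--fibering machinery with the key non-degeneracy reduced to the geometric disjointness statement (this is precisely the content of the paper's Lemma~\ref{DCor}). Two points need correction, though neither changes the strategy.

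First, in (1) your finiteness argument is incomplete: ``or else $F\equiv 0$'' is false when $f\le 0$ a.e.\ but $f\not\equiv 0$, since then $F(u,v)<0$ whenever $u,v$ share support inside $\{f<0\}$. The paper's fix is to take $u,v$ with \emph{disjoint} supports, forcing $F(u,v)=0$ regardless of the sign of $f$.

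Second, your argument for (2) contains a false claim: a monotone upper-semicontinuous function \emph{can} have jumps (e.g.\ $g(r)=\mathbf{1}_{\{r\le 1\}}$ is non-increasing and u.s.c.). Upper semicontinuity of a non-increasing function gives only left-continuity. What rescues the argument---and what the paper does---is to use \emph{both} monotonicities simultaneously: $\lambda^*_f$ non-increasing gives $\lim_{r\uparrow r_0}\lambda^*_f(r)\ge \lambda^*_f(r_0)\ge \lim_{r\downarrow r_0}\lambda^*_f(r)$, while $\mu^*_f(r)=r\lambda^*_f(r)$ non-decreasing gives the reverse inequalities after dividing by $r_0$. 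Comparing the two chains forces equality of the one-sided limits. You have all the ingredients (you wrote the $\mu^*_f$ formula in (4)); you just need to invoke them together rather than appeal to a false general principle.

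For (3) your minimizing-sequence argument is a legitimate alternative to the paper's, which instead cites an existence result for the minimizer (Proposition~\ref{exCr}) and argues directly that it must equal $(\varphi_1,\psi_1)$. Your route avoids that proposition at the cost of a compactness step for $F$; both are fine.
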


Here and subsequently, $\Gamma_f(r) < +\infty$ means $\lambda^*_f(r), \mu^*_f(r) < +\infty$, and $\Gamma_f(r) > (\lambda_1, \mu_1)$ means $\lambda^*_f(r) > \lambda_1$ and $\mu^*_f(r) > \mu_1$.

We stress that statement (5) of Theorem \ref{Th3} implies $\Sigma^*_f \subseteq \Sigma^*$ and therefore $\Gamma_f$ is the lower bound for $\Gamma$ in quadrant $\{\lambda >\lambda_1 \} \times \{\mu > \mu_1 \}$, i.e. $\Gamma_f \leq \Gamma$. 
Moreover, statement (3) of Theorem \ref{Th3} implies that the assumption $F(\varphi_1, \psi_1) < 0$ is sufficient for nonemptyness of $\Sigma^*$. Similar to the scalar analog of \eqref{D1} (see \cite{ilrunSup}), we suspect that this assumption is also necessary.
In Section 4, it will be shown that it is meaningful to call $\Sigma^*_f$ a maximal domain of applicability of the Nehari manifold and fibering methods in quadrant $\{\lambda >\lambda_1 \} \times \{\mu > \mu_1 \}$.  
Nevertheless, it should be undertaken that $\Sigma^*_f$, in general, is not a maximal domain of existence of nonnegative solutions for \eqref{D1}, i.e. $\Sigma^*_f \neq \Sigma^*$ (see \cite[Section 10]{BobkovIlyasov}).

The definition \eqref{sf} implies that $\lambda^*_f(r)$ doesn't depend on parameters $c_1, c_2 > 0$ of \eqref{D1}, and therefore $\Gamma_f$ is invariant under a change of  $c_1, c_2$.
In Section 4, we study also a behaviour of $\Gamma_f$. Namely,  the variational principle \eqref{sf} allows us to provide the precise information on asymptotics of $\Gamma_f$ for $r \to 0$ and $r \to +\infty$, see Lemmas \ref{lemma:2}, \ref{lemma:3} and Figs.~1, 2.

\subsection{Upper bound for \textnormal{(\ref{lsol})}}
In this part we obtain an upper estimate for the boundary of a maximal existence domain of \textit{positive} $C^1$-solutions for \eqref{D1} by means of the extended functional method \cite{ilfunc}.
For this purpose we consider the family of the extended functionals 
$\Phi_{(\lambda,\lambda \,r)} :(S \times S)\times (\overline{S}\times \overline{S} ) \to \mathbb{R}$
%$\Phi_{(\lambda,\lambda \,r)}: (W_0^{1,p} \times W_0^{1,q})\times (W_0^{1,p} \times W_0^{1,q} ) \to \mathbb{R}$,
which correspond to \eqref{D1} and are defined as
\begin{align}
\notag
 \Phi_{(\lambda, \lambda \,r)}(u,v; \xi,\eta) &:= 
\int_\Omega |\nabla u|^{p-2} \nabla u \nabla \xi \, dx - 
\lambda \int_\Omega |u|^{p-2} u \xi \, dx \\
\notag
&+ 
\int_\Omega |\nabla v|^{q-2} \nabla v \nabla \eta \, dx - 
\lambda \, r \int_\Omega |v|^{q-2} v \eta \, dx  \\
&- 
c_1 \int_\Omega f |u|^{\alpha-2} |v|^\beta u \xi \, dx -
c_2 \int_\Omega f |u|^{\alpha} |v|^{\beta-2} v \eta \, dx, \quad r > 0.
\end{align} 
%Let $(u,v; \xi,\eta) \in (S \times S)\times (\overline{S}\times \overline{S})$, where
Here
\begin{align*}
&S:= \{w \in C^1(\overline{\Omega}):~ w > 0 \mbox{ in } \Omega, ~w = 0 \mbox{ on } \partial \Omega \}, \\
&\overline{S}:= \{w \in C^1(\overline{\Omega}):~ w \geq 0 \mbox{ in } \Omega, ~w = 0 \mbox{ on } \partial \Omega \}.
\end{align*}
Resolving the equation $\Phi_{(\lambda, \lambda \, r)}(u, v; \xi, \eta) = 0$ with respect to $\lambda$, we obtain
\begin{align}
\notag
\mathcal{L}_r&(u, v; \xi, \eta) := \lambda \\
\label{Efm2}
&= \frac{\int_\Omega |\nabla u|^{p-2} \nabla u \nabla \xi \, dx +  \int_\Omega |\nabla v|^{q-2} \nabla v \nabla \eta \, dx
- 
c_1 \int_\Omega f |u|^{\alpha-2} |v|^\beta u \xi \, dx -
c_2 \int_\Omega f |u|^{\alpha} |v|^{\beta-2} v \eta \, dx }
{\int_\Omega |u|^{p-2} u \xi \, dx + r \int_\Omega |v|^{q-2} v \eta \, dx}.
\end{align}
Henceforth we will assume also that $\{ (0, 0) \} \not\in \overline{S}\times \overline{S}$ to circumvent the case of zero denominator in \eqref{Efm2}.

Now following \cite{ilfunc} we introduce for each $r>0$ the extended functional critical points
\begin{align}
\label{extcrit}
\lambda^*_e(r) := \sup_{u, v \in S} \inf_{\xi, \eta \in \overline{S}} \mathcal{L}_r(u, v; \xi, \eta), \quad \mu^*_e(r) := \lambda^*_e(r) \, r,
\end{align}
which form a curve $\Gamma_e(r) = \left( \lambda^*_e(r), \mu^*_e(r) \right)$, $r>0$.
The main properties of $\Gamma_e$ are given in the following theorem.  
\begin{theorem} \label{Th0}
Assume $\alpha,\beta \geq 1$.  Then 
\begin{enumerate}
\item $\Gamma_e(r)  > -\infty$ for all $r \in (0, +\infty)$.
\item If $\Omega^0 \cup \Omega^+$ has nonempty interior a.e., then $\Gamma_e(r)<+\infty$ for all $r \in [0,+\infty]$.
\item If $p = q$ and  $\nu(\Omega^0 \cup \Omega^+) = 0$, then $\Gamma_e(r) = +\infty$ for all $r \in (0,+\infty)$.
\item If $\Gamma_e(r_0) > (0, 0)$  for some $r_0>0$, then $\Gamma_e(r)>(0, 0)$ for all $r \in (0, +\infty)$.
\item If $\Gamma_e(r_0) <+\infty$  for some $r_0>0$, then $\Gamma_e(r) <+\infty $ for all $r \in (0, +\infty)$.
\item If $(0, 0) < \Gamma_e(r) < +\infty$ for all $r \in (0,+\infty)$, then $\Gamma_e$ is continuous on $(0, +\infty)$.
\item If $(0, 0) < \Gamma_e(r) < +\infty$ for all $r \in (0,+\infty)$, then $\lambda^*_e(r)$ is non-increasing and $\mu^*_e(r)$ is non-decreasing on $(0, +\infty)$.
\end{enumerate}
\end{theorem}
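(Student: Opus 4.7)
The plan rests on the structural decomposition $\mathcal{L}_r(u,v;\xi,\eta) = \mathcal{N}(u,v;\xi,\eta)/(\mathcal{B}(u;\xi) + r\,\mathcal{C}(v;\eta))$, where $\mathcal{B}(u;\xi) := \int u^{p-1}\xi\,dx \geq 0$, $\mathcal{C}(v;\eta) := \int v^{q-1}\eta\,dx \geq 0$, and the numerator $\mathcal{N}$, after formal integration by parts, reads $\int (-\Delta_p u - c_1 f u^{\alpha-1}v^\beta)\xi\,dx + \int(-\Delta_q v - c_2 f u^\alpha v^{\beta-1})\eta\,dx$. The two analytic workhorses will be this integrated-by-parts form and the $p$- and $q$-Picone inequalities applied to test functions of the form $\varphi^p/u^{p-1}$ and $\psi^q/v^{q-1}$. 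Two structural observations drive the proof: since $\mathcal{L}_r$ is a ratio of expressions linear in $(\xi,\eta)$, the infimum $\inf_{\xi,\eta}\mathcal{L}_r(u,v;\cdot)$ reduces to a minimum of essential infima of pointwise ratios; and the only $r$-dependence in $\mathcal{L}_r$ lives in the denominator, so $r\mapsto\mathcal{L}_r$ is a rational function for each fixed quadruple $(u,v;\xi,\eta)$.

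For the boundedness statements, part (1) is handled by testing the outer supremum with $(u_0,v_0)=(t\varphi_1, t\psi_1)$, tuning $t$ (or replacing $\varphi_1,\psi_1$ by suitable powers) so that the residuals $(-\Delta_p u_0 - c_1 f u_0^{\alpha-1}v_0^\beta)/u_0^{p-1}$ and its $v$-counterpart have essential infima bounded below; control near $\partial\Omega$ will use Hopf's lemma together with the $C^{1,\delta}$-regularity of $\varphi_1,\psi_1$. For (2), fix a ball $\mathcal{B}_0\subset\Omega^0\cup\Omega^+$ provided by the nonempty-interior-a.e.~assumption, let $\varphi_{\mathcal{B}_0},\psi_{\mathcal{B}_0}$ denote the first Dirichlet $p$- and $q$-eigenfunctions on $\mathcal{B}_0$ extended by zero, and plug in the Picone test $\xi=\varphi_{\mathcal{B}_0}^p/u^{p-1}$, $\eta=\psi_{\mathcal{B}_0}^q/v^{q-1}$: on $\mathcal{B}_0$ the $f$-terms contribute $\leq 0$ and the Picone inequality bounds the gradient integrals by $\lambda_1(\mathcal{B}_0)\int\varphi_{\mathcal{B}_0}^p$ and $\mu_1(\mathcal{B}_0)\int\psi_{\mathcal{B}_0}^q$, yielding $\mathcal{L}_r\leq\max(\lambda_1(\mathcal{B}_0),\mu_1(\mathcal{B}_0)/r)$ uniformly in $(u,v)$, after a density argument placing the Picone functions in $\overline{S}$. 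For (3), with $p=q$ and $f<0$ a.e., test with $u=v=tw$ for suitable $w\in S$ and scale $t$ (to $0^+$ if $\alpha+\beta<p$, to $+\infty$ if $\alpha+\beta>p$); the $(-f)$-driven term in $\mathcal{N}$ then dominates the principal part, the pointwise ratios diverge, and $\lambda^*_e(r)=+\infty$.

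For the propagation and monotonicity statements (4), (5) and (7), the rational $r$-dependence does the work. For (4), if $\inf_{\xi,\eta}\mathcal{L}_{r_0}(u_0,v_0;\cdot)\geq\varepsilon_0>0$ then $\mathcal{N}\geq\varepsilon_0(\mathcal{B}+r_0\mathcal{C})$ for every admissible $(\xi,\eta)$, hence $\mathcal{L}_r(u_0,v_0;\cdot)\geq\varepsilon_0(\mathcal{B}+r_0\mathcal{C})/(\mathcal{B}+r\mathcal{C})\geq\varepsilon_0\min(1,r_0/r)>0$. For (5), given $\lambda^*_e(r_0)=M_0<+\infty$, for each $(u,v)$ pick a minimising sequence $(\xi_n,\eta_n)$ at $r_0$: when $\mathcal{N}_n\geq 0$ one has $\mathcal{N}_n\leq(M_0+1)(\mathcal{B}_n+r_0\mathcal{C}_n)$, giving $\mathcal{L}_r(u,v;\xi_n,\eta_n)\leq(M_0+1)\max(1,r_0/r)$, while $\mathcal{N}_n<0$ yields a trivial bound. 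For (7), assume $0<\Gamma_e(r)<+\infty$: near-maximising $(u,v)$ force $\mathcal{N}>0$ for all $(\xi,\eta)$, so $r\mapsto\mathcal{N}/(\mathcal{B}+r\mathcal{C})$ is non-increasing, and this monotonicity is preserved by $\inf_{\xi,\eta}$ and $\sup_{u,v}$; the substitution $s=1/r$ rewrites $\mu^*_e(r)=\sup_{u,v}\inf_{\xi,\eta}\mathcal{N}/(s\mathcal{B}+\mathcal{C})$, non-decreasing in $r$. Statement (6) follows immediately from (7): a downward jump of the non-increasing $\lambda^*_e$ at some $r_0$ would, through $\mu^*_e(r)=r\lambda^*_e(r)$, produce a downward jump of $\mu^*_e$ at $r_0$, contradicting its non-decreasing character.

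The hardest step is expected to be (3): driving the pointwise essential infima of $\mathcal{N}/\mathcal{B}$ and $\mathcal{N}/\mathcal{C}$ to $+\infty$ apparently requires either a quantitative lower bound on $-f$ or a more elaborate choice of $(u,v)$ than a simple scalar rescaling, especially in the scale-invariant borderline $\alpha+\beta=p$ where the scaling trick degenerates. Part (1) is also delicate because a suitable $(u_0,v_0)$ must be chosen whose residual ratios remain bounded below uniformly up to $\partial\Omega$, and part (2) requires a careful density argument to place the Picone test functions into the $C^1$-class $\overline{S}$.
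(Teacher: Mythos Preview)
Your treatment of parts (2), (4), (5), (6), (7) is essentially the same as the paper's: Picone's identity with a test function supported in $\Omega^0\cup\Omega^+$ for the upper bound, and the rational $r$-dependence of the denominator for monotonicity and continuity. Nothing to add there.

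\medskip

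There is, however, a genuine gap in your plan for part (3). Take $\alpha+\beta>p$, $w=\varphi_1$, and set $\xi=0$ while letting $\eta\in\overline S$ concentrate near $\partial\Omega$. With $u=v=tw$ one computes
\[
\mathcal{L}_r(tw,tw;0,\eta)=\frac{\lambda_1}{r}+\frac{c_2\,t^{\alpha+\beta-p}}{r}\cdot\frac{\int_\Omega(-f)\varphi_1^{\alpha+\beta-1}\eta\,dx}{\int_\Omega\varphi_1^{p-1}\eta\,dx}.
\]
Since $\varphi_1$ vanishes linearly at $\partial\Omega$ (Hopf), the last quotient behaves like $d(x,\partial\Omega)^{\alpha+\beta-p}\to 0$ along such $\eta$; hence $\inf_{\xi,\eta}\mathcal{L}_r(tw,tw;\xi,\eta)\le\lambda_1/r$ for \emph{every} $t>0$, and no rescaling of a fixed profile can force the infimum to $+\infty$. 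The same boundary obstruction defeats any choice $w\in S$ with $-\Delta_p w$ bounded, and a quantitative lower bound on $-f$ does not help. The paper circumvents this completely differently: it imports from \cite{ilrunSup} a positive $C^{1,\delta}$-solution $u_\lambda$ of the scalar equation $-\Delta_p u=\lambda|u|^{p-2}u+f|u|^{\alpha+\beta-2}u$ for \emph{every} $\lambda>\lambda_1$, notes that $(u_\lambda,u_\lambda)$ solves the system (after the rescaling of Proposition~\ref{semivar}), and then the numerator of $\mathcal{L}_r$ collapses exactly to $\lambda$ times the denominator-type expression, giving $\inf_{\xi,\eta}\mathcal{L}_r\ge\lambda\min\{1,1/r\}\to+\infty$. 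The crucial point is that an \emph{exact solution} makes the residual identically zero, so boundary-concentrating test functions gain nothing.

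\medskip

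A smaller issue occurs in your approach to part (1). With $(u_0,v_0)=(t\varphi_1,t\psi_1)$ the pointwise residual ratio in the $\xi$-slot is $\lambda_1-c_1t^{\alpha+\beta-p}f\,\varphi_1^{\alpha-p}\psi_1^{\beta}$, and when $\alpha+\beta<p$ the factor $\varphi_1^{\alpha-p}\psi_1^{\beta}\sim d(x,\partial\Omega)^{\alpha+\beta-p}$ blows up at the boundary, so no uniform lower bound $C_1$ exists unless $f$ happens to vanish there. The paper instead takes $\varphi,\psi\in S$ solving $-\Delta_p\varphi=1$, $-\Delta_q\psi=1$: then $\int|\nabla\varphi|^{p-2}\nabla\varphi\nabla\xi=\int\xi$ does \emph{not} vanish at the boundary, and the required pointwise inequality becomes $1-c_1f\varphi^{\alpha-1}\psi^{\beta}\ge C_1\varphi^{p-1}$, which holds for some (possibly negative) $C_1$ precisely because the left side tends to $1$ while the right tends to $0$ near $\partial\Omega$. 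This choice is the key device your ``suitable powers'' remark is groping towards.
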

These results are sketchily depicted on Figs.~1, 2. 
We suppose that statement (3) holds for any $p, q > 1$.
Analogously to $\Gamma_f$ we show an additional property of the curve $\Gamma_e$, namely the invariance of $\Gamma_e$ under a change of parameters $c_1, c_2$.
\begin{lemma}\label{invar}
Let $\alpha, \beta \geq 1$ and $\frac{\alpha}{p} + \frac{\beta}{q} \neq 1$. Then $\Gamma_e$ is independent of $c_1, c_2 > 0$.
\end{lemma}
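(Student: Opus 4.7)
The plan is to exhibit an explicit multiplicative rescaling of the four arguments of $\mathcal{L}_r$ that converts the functional with parameters $(c_1,c_2)$ into the same functional with any other target parameters $(c_1',c_2') > 0$. Since such a rescaling will be a bijection of $S \times S \times \overline{S} \times \overline{S}$ onto itself (both $S$ and $\overline{S}$ being cones closed under positive scaling), it will follow that the sup-inf defining $\lambda^*_e(r)$ in \eqref{extcrit} takes the same value for every choice of $c_1, c_2 > 0$, and $\mu^*_e(r) = r\lambda^*_e(r)$ then inherits the invariance automatically.

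Concretely, I would substitute $(u,v;\xi,\eta)\mapsto(su,tv;a\xi,b\eta)$ with $s,t,a,b>0$ into $\mathcal{L}_r$ and read off the scaling factor of each of the six integrals: the ``principal'' terms acquire the factors $a s^{p-1}$ and $b t^{q-1}$ (in both numerator and denominator), while the two nonlinear terms in the numerator acquire $a s^{\alpha-1}t^{\beta}$ and $b s^{\alpha}t^{\beta-1}$. Imposing the single relation $as^{p-1}=bt^{q-1}$ makes a common scalar factor cancel between numerator and denominator, reducing $\mathcal{L}_r(su,tv;a\xi,b\eta)$ exactly to the expression $\mathcal{L}_r(u,v;\xi,\eta)$ but with $c_1,c_2$ replaced by
\[
c_1' = c_1\, s^{\alpha-p}\, t^{\beta}, \qquad c_2' = c_2\, s^{\alpha}\, t^{\beta-q}.
\]

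The core question therefore reduces to this: given arbitrary $c_1',c_2'>0$, can one solve the above pair of equations for positive $s,t$? Taking logarithms converts it into a $2\times 2$ linear system in $(\log s,\log t)$ whose determinant simplifies to $(\alpha-p)(\beta-q)-\alpha\beta = pq\bigl(1-\frac{\alpha}{p}-\frac{\beta}{q}\bigr)$. This is precisely where the hypothesis $\frac{\alpha}{p}+\frac{\beta}{q}\neq 1$ is used: it guarantees invertibility of the system, and hence the existence and uniqueness of the required $s,t>0$. Choosing for example $a = s^{1-p}$ and $b = t^{1-q}$ so that $as^{p-1}=bt^{q-1}=1$ then closes the argument.

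I do not foresee any real technical obstacle; the proof is essentially a careful bookkeeping scaling computation once the correct rescaling has been identified. The only point worth emphasising is that the exceptional surface $\frac{\alpha}{p}+\frac{\beta}{q}=1$ on which the argument degenerates is exactly the one ruled out by hypothesis: on this surface the nonlinearities enjoy a homogeneity under which the coefficient ratio $c_1/c_2$ becomes a genuine scaling invariant and hence cannot be rescaled away.
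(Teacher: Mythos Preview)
Your argument is correct. The rescaling you exhibit is precisely the one appearing in Proposition~\ref{semivar} of the paper (with your $(s,t)$ playing the role of their $(t,s)$), and the determinant computation and the identification of the degenerate case $\frac{\alpha}{p}+\frac{\beta}{q}=1$ are accurate. The extra freedom in $(a,b)$ (subject to $as^{p-1}=bt^{q-1}$) is exactly what is needed to make a common factor cancel between numerator and denominator, and since $\overline{S}$ is a cone the bijection of the admissible $(\xi,\eta)$ is harmless for the infimum.

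The paper, however, argues differently: instead of manipulating $\mathcal{L}_r$ directly, it invokes Theorem~\ref{Th1}, which characterises the region below $\Gamma_e$ as the set of parameters admitting a positive $C^1$-supersolution and the region above $\Gamma_e$ as admitting none. The invariance then follows by contradiction: if $\Gamma_e(c_1,c_2)$ and $\Gamma_e(d_1,d_2)$ differed at some $r_0$, a supersolution for one choice of coefficients (supplied by Theorem~\ref{Th1}(2)) would transfer via Proposition~\ref{semivar} to a supersolution for the other, contradicting Theorem~\ref{Th1}(1). Your route is more elementary and self-contained, since it avoids the detour through the supersolution characterisation; the paper's route, on the other hand, makes explicit the conceptual reason for the invariance, namely that $\Gamma_e$ is intrinsically the threshold for positive supersolutions and Proposition~\ref{semivar} shows that this notion does not see $c_1,c_2$.
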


Let us introduce the following sets:
\begin{align*}
\mathcal{R} &:= \bigcup_{r>0} \left\{ (\lambda, \mu) \in \mathbb{R}^2:~ \lambda > \lambda^*_e(r),~ \mu > \mu^*_e(r) \right\}, \\
\Sigma^*_e &:= \bigcup_{r>0} \left\{ (\lambda, \mu) \in \mathbb{R}^2:~ \lambda_1 < \lambda < \lambda^*_e(r),~ \mu_1 < \mu < \mu^*_e(r) \right\}.
\end{align*}
Let us note that $\mathcal{R} \cap \Sigma^*_e = \emptyset$. If $\mathcal{R}$ or $\Sigma^*_e$ is empty, then the assertion is obvious. 
Assume that $\mathcal{R}, \Sigma^*_e \neq \emptyset$ and suppose a contradiction, i.e. there exist $r_1, r_2 > 0$, such that
$$
\left\{ (\lambda, \mu) \in \mathbb{R}^2:~ \lambda > \lambda^*_e(r_1),~ \mu > \mu^*_e(r_1) \right\} \cap \left\{ (\lambda, \mu) \in \mathbb{R}^2:~ \lambda_1 < \lambda < \lambda^*_e(r_2),~ \mu_1 < \mu < \mu^*_e(r_2) \right\} \neq \emptyset.
$$
Thus, we can find $(\lambda, \mu) \in \mathbb{R}^2$, such that $
\lambda^*_e(r_1) < \lambda < \lambda^*_e(r_2)$ and 
$\mu^*_e(r_1) < \mu < \mu^*_e(r_2)$. However, these inequalities can not be satisfied simultaneously, due to statement (7) of Theorem \ref{Th0}.

Note that in view of statements $(5)$, $(6)$ of Theorem \ref{Th0} one has  $\mathcal{R}\neq \emptyset$ if and only if $\Gamma_e(r_0) < +\infty$ for some $r_0 \in (0,+\infty)$, and  $\Sigma^*_e \neq \emptyset$ if and only if $\Gamma_e(r_0) > (\lambda_1, \mu_1)$ for some $r_0 \in (0,+\infty)$. 
From these observations and the fact  $\mathcal{R} \cap \Sigma^*_e = \emptyset$ it is not hard to conclude that 
$\Gamma_e$ separates the sets $\mathcal{R}$ and $\Sigma^*_e$ in quadrant $\{\lambda > \lambda_1\}\times \{\mu > \mu_1\}$.

\begin{figure}[!ht]
\begin{minipage}[t]{0.47\linewidth}
\center{\includegraphics[scale=0.8]{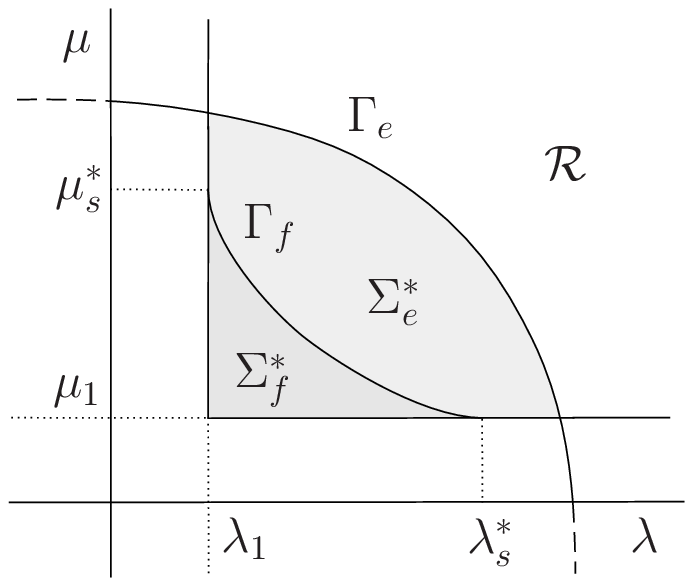}}\\
\caption{$\Omega^+ \cup \Omega^0$ has nonempty interior and $F(\varphi_1, \psi_1) < 0$}
\end{minipage}
\hfill
\begin{minipage}[t]{0.47\linewidth}
\center{\includegraphics[scale=0.8]{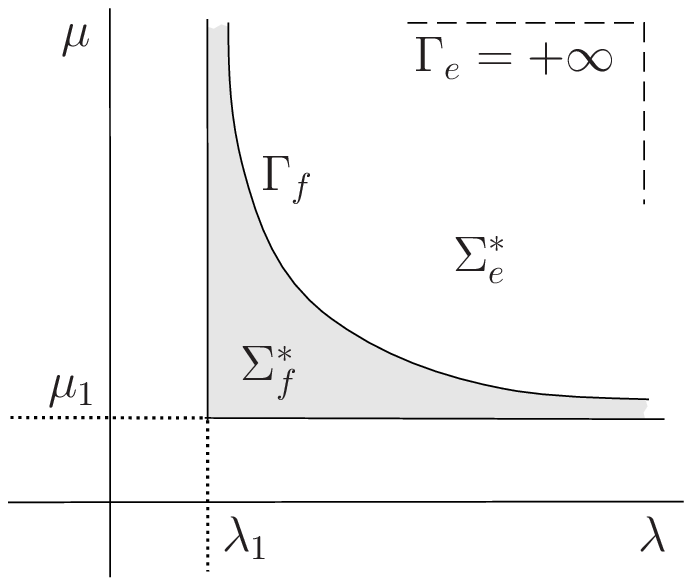}} \\
\caption{$\nu(\Omega^+ \cup \Omega^0) = 0$}
\end{minipage}
\end{figure}

The main results on $\mathcal{R}$ and  $\Sigma^*_e$  are given in  the following theorem.
\begin{theorem}
\label{Th1}
Assume $\alpha,\beta \geq 1$. Then 
\begin{enumerate}
	\item  \eqref{D1} has no positive  $C^1$-solutions for any $(\lambda, \mu) \in \mathcal{R}$; %provided  $\mathcal{R} \neq \emptyset$;
	\item  \eqref{D1} has a positive $C^1$-supersolution  for any $(\lambda, \mu) \in \Sigma^*_e$. %provided $\Sigma^*_e \neq \emptyset$.
\end{enumerate}
\end{theorem}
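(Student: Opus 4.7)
The plan rests on the elementary identity
\begin{equation*}
\Phi_{(\tilde\lambda,\tilde\lambda r)}(u_0,v_0;\xi,\eta) \;=\; (\lambda-\tilde\lambda)\int_\Omega u_0^{p-1}\xi\,dx \;+\; (\mu - \tilde\lambda\,r)\int_\Omega v_0^{q-1}\eta\,dx,
\end{equation*}
valid for every weak solution $(u_0,v_0)$ of \eqref{D1} at the parameter $(\lambda,\mu)$, and obtained by substituting the two weak equations \eqref{defin} into the definition of $\Phi$. Combined with the strict positivity of the denominator in \eqref{Efm2} whenever $u_0,v_0>0$ a.e.\ in $\Omega$ and $(\xi,\eta)\in\overline{S}\times\overline{S}\setminus\{(0,0)\}$, this identity is the bridge between existence of (super)solutions for \eqref{D1} and the minimax quantity \eqref{extcrit}.

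For statement~(1) I would argue by contradiction. If $(u_0,v_0)$ is a positive $C^1$-solution at some $(\lambda,\mu)\in\mathcal{R}$, pick $r>0$ with $\lambda>\lambda^*_e(r)$ and $\mu>\lambda^*_e(r)\,r$, and an intermediate $\tilde\lambda\in(\lambda^*_e(r),\min(\lambda,\mu/r))$. The identity above forces $\Phi_{(\tilde\lambda,\tilde\lambda r)}(u_0,v_0;\xi,\eta)>0$, hence $\mathcal{L}_r(u_0,v_0;\xi,\eta)>\tilde\lambda$, for every admissible $(\xi,\eta)$. Taking the infimum in $(\xi,\eta)$ and then using $(u_0,v_0)\in S\times S$ as a competitor in the supremum defining $\lambda^*_e(r)$ yields $\lambda^*_e(r)\geq\tilde\lambda>\lambda^*_e(r)$, the desired contradiction.

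For statement~(2), given $(\lambda,\mu)\in\Sigma^*_e$ I would choose $r>0$ with $\lambda_1<\lambda<\lambda^*_e(r)$ and $\mu_1<\mu<\lambda^*_e(r)\,r$, and pick $\tilde\lambda\in(\max(\lambda,\mu/r),\lambda^*_e(r))$. The sup-definition of $\lambda^*_e(r)$ supplies a pair $(u_0,v_0)\in S\times S$ with $\inf_{\xi,\eta\in\overline{S}}\mathcal{L}_r(u_0,v_0;\xi,\eta)>\tilde\lambda$, equivalently $\Phi_{(\tilde\lambda,\tilde\lambda r)}(u_0,v_0;\xi,\eta)>0$ for all admissible $(\xi,\eta)$. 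Specializing to $(\xi,0)$ with $\xi\in\overline{S}\setminus\{0\}$ and invoking $\lambda<\tilde\lambda$ yields the first supersolution inequality at $(\lambda,\mu)$; the symmetric choice $(0,\eta)$ together with $\mu<\tilde\lambda\,r$ yields the second. Since $u_0,v_0\in S$ are positive $C^1$-functions vanishing on $\partial\Omega$, $(u_0,v_0)$ is the required positive $C^1$-supersolution, after the routine approximation remark that every nonnegative test function in $W_0^{1,p}$ or $W_0^{1,q}$ can be approached by nonnegative $C^1_0(\Omega)$ functions, which in turn sit inside $\overline{S}$. I do not anticipate a serious obstacle here; the only delicate bookkeeping is to verify that the denominator in $\mathcal{L}_r$ never vanishes on the admissible test set, which is immediate from $u_0,v_0>0$ a.e.\ in $\Omega$ and the convention $(\xi,\eta)\neq(0,0)$ adopted after \eqref{Efm2}.
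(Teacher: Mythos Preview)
Your proof is correct and follows essentially the same strategy as the paper: both arguments exploit the equivalence between positive $C^1$-(super)solutions and the sign of the extended functional $\Phi$, together with the definition of $\lambda^*_e(r)$ as a sup--inf.

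There is one organizational difference worth noting. The paper fixes $r_0=\mu/\lambda$ and then has to argue, via the separation property $\mathcal{R}\cap\Sigma^*_e=\emptyset$ (which in turn rests on the monotonicity statement~(7) of Theorem~\ref{Th0}), that $(\lambda,\mu)\in\Sigma^*_e$ forces $\lambda<\lambda^*_e(r_0)$, and dually for $\mathcal{R}$. You instead take $r$ directly from the union defining $\Sigma^*_e$ or $\mathcal{R}$, so that the needed inequalities $\lambda<\lambda^*_e(r)$, $\mu<\mu^*_e(r)$ (resp.\ $>$) are immediate; your choice of an intermediate $\tilde\lambda$ then handles the fact that this $r$ need not equal $\mu/\lambda$. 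This bypasses the separation/monotonicity input and makes the proof self-contained. Your explicit identity for $\Phi_{(\tilde\lambda,\tilde\lambda r)}$ applied to a solution, and your separate testing with $(\xi,0)$ and $(0,\eta)$ in part~(2), are exactly what underlies the paper's equivalence \eqref{C1solution}, just made visible.
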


Statement (1) of Theorem \ref{Th1} implies that $\Gamma_e$ is the upper bound for a maximal existence domain of \textit{positive} $C^1$-solutions for \eqref{D1}.
Moreover, under the assumptions $\alpha \geq p$, $\beta \geq q$ and $\frac{\alpha}{p^*} + \frac{\beta}{q^*} < 1$, $\Gamma_e$ is the upper bound for $\Gamma$, i.e.
$\Gamma \leq \Gamma_e$. Indeed, in this case any nonnegative weak solution of \eqref{D1} is positive and of class $[C^1(\overline{\Omega})]^2$ (see Appendix B).
This fact also yields $\Gamma_f \leq \Gamma \leq \Gamma_e$ in quadrant $\{\lambda >\lambda_1 \} \times \{\mu > \mu_1 \}$, i.e. $\Sigma^*_f \subseteq \Sigma^* \subseteq \Sigma^*_e$.

The existence of supersolutions for \eqref{D1} in $\Sigma^*_e$ can be complemented by the existence of subsolutions for  $\lambda > \lambda_1$, $\mu > \mu_1$, which can be easily constructed using $\varphi_1$ and $\psi_1$. However,  in contrast to scalar equations, the existence of sub- and supersolutions in the sense of definition \eqref{defin} is not enough, in general, to obtain a solution for systems of elliptic equations (cf. \cite[p.~999]{Sattinger}).
Nevertheless, we suppose that $\Gamma_e$ is the \textit{precise} boundary of the maximal existence domain of positive $C^1$-solutions for \eqref{D1} and it coincides with $\Gamma$ in the case $\alpha \geq p$, $\beta \geq q$. 
The partial confirmation of this conjecture follows from Lemma \ref{lem:solution}. Moreover, for the scalar problems related to \eqref{D1} the extended functional critical point determines the sharp boundary of maximal existence interval of positive solutions \cite{ilrunSup}.

Summarizing the results of Theorems \ref{Th3} and \ref{Th0}, we get the following estimates for $\lambda^*(r)$ and $\mu^*(r)$, which are vector counterparts of \cite[Theorem 1.1]{ilrunSup} (see Fig. 1, 2).
\begin{corollary}
\label{Th2}
Assume that $\alpha \geq p$, $\beta \geq q$, $\frac{\alpha}{p^*}+\frac{\beta}{q^*} < 1$,     $\Omega^0 \cup \Omega^+$ and $F(\varphi_1, \psi_1) < 0$ has nonempty interior a.e. 
Then there exist $r \in (0, +\infty)$ such that
\begin{align*}
\lambda_1 < \lambda^*_f(r) \leq \lambda^*(r) \leq \lambda^*_e(r) < +\infty, \\
\mu_1 < \mu^*_f(r) \leq \mu^*(r) \leq \mu^*_e(r) < +\infty,
\end{align*}
or, equivalently, $(\lambda_1, \mu_1) < \Gamma_f(r) \leq \Gamma(r) \leq \Gamma_e(r) < +\infty$ in quadrant $\{\lambda >\lambda_1 \} \times \{\mu > \mu_1 \}$.
\end{corollary}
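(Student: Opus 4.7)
The plan is to assemble the conclusion by applying the two main theorems at a single value of $r$, using the regularity hypotheses to bridge the gap between nonnegative weak solutions (used in the definition of $\lambda^*$) and positive $C^1$-solutions (used in Theorem \ref{Th1}). First I would use statement (3) of Theorem \ref{Th3}: the standing hypothesis $F(\varphi_1, \psi_1) < 0$ produces some $r_0 \in (0, +\infty)$ with $\lambda^*_f(r_0) > \lambda_1$ and $\mu^*_f(r_0) > \mu_1$, and statement (1) of the same theorem gives finiteness $\Gamma_f(r_0) < +\infty$. This $r_0$ is fixed throughout the remaining steps.

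Next I would read off the lower comparison $\Gamma_f(r_0) \leq \Gamma(r_0)$ directly from statement (5) of Theorem \ref{Th3}: for every $\lambda \in (\lambda_1, \lambda^*_f(r_0))$ the pair $(\lambda, \lambda r_0)$ lies in $\Sigma^*_f$ and hence admits a nonnegative $C^1$-solution, so the definition \eqref{lsol} yields $\lambda^*_f(r_0) \leq \lambda^*(r_0)$, and multiplication by $r_0$ gives the analogous bound for $\mu$. The upper finiteness $\lambda^*_e(r_0) < +\infty$ is delivered by statement (2) of Theorem \ref{Th0}, whose hypothesis is exactly the assumption that $\Omega^0 \cup \Omega^+$ has nonempty interior a.e.

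The main point to verify is the middle inequality $\Gamma(r_0) \leq \Gamma_e(r_0)$, and this is where the conditions $\alpha \geq p$, $\beta \geq q$, $\tfrac{\alpha}{p^*} + \tfrac{\beta}{q^*} < 1$ do their work. Suppose, for contradiction, that $\lambda^*(r_0) > \lambda^*_e(r_0)$; pick $\lambda$ strictly between these two values, so that $(\lambda, \lambda r_0) \in \mathcal{R}$. By the definition of $\lambda^*(r_0)$ there exists a nonnegative weak solution $(u, v)$ of \eqref{D1} at parameters $(\lambda, \lambda r_0)$. Since $\lambda > \lambda^*_e(r_0) \geq \lambda_1$ and $\lambda r_0 > \mu^*_e(r_0) \geq \mu_1$, neither semi-trivial pair $(\varphi_1, 0)$, $(0, \psi_1)$ can serve as such a solution, so $(u, v)$ is nontrivial. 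The regularity hypothesis $\tfrac{\alpha}{p^*}+\tfrac{\beta}{q^*} < 1$ promotes $(u,v)$ to a $C^1$-solution via Lemma \ref{lem:l} and Corollary \ref{rem:c}, while $\alpha \geq p$, $\beta \geq q$ together with Corollary \ref{rem:>0} force $u, v > 0$ in $\Omega$. This gives a positive $C^1$-solution with parameters in $\mathcal{R}$, contradicting statement (1) of Theorem \ref{Th1}. Combining the four inequalities at $r_0$ yields the claim.

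The hardest step, and really the only one that is not a direct citation, is this contradiction argument: the definitions of $\lambda^*(r)$ and of $\mathcal{R}$ use different solution classes, and the corollary goes through only because the assumed exponent constraints are precisely those under which every admissible nonnegative weak solution in the relevant parameter range is in fact a positive $C^1$-solution. No further estimates are needed; all quantitative content has been packaged into Theorems \ref{Th3}, \ref{Th0}, \ref{Th1} and the regularity results of the appendix.
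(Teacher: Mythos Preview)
Your proof is correct and follows exactly the approach the paper takes: the corollary is presented there as a direct summary of Theorems~\ref{Th3}, \ref{Th0}, and~\ref{Th1}, with the inequality $\Gamma \leq \Gamma_e$ justified (in the paragraph immediately following Theorem~\ref{Th1}) by precisely the regularity upgrade you spell out via Lemma~\ref{lem:l} and Corollaries~\ref{rem:c}, \ref{rem:>0}.

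One small phrasing slip in the contradiction step: when you write ``pick $\lambda$ strictly between these two values \ldots\ By the definition of $\lambda^*(r_0)$ there exists a nonnegative weak solution at $(\lambda, \lambda r_0)$,'' note that the supremum in \eqref{lsol} only guarantees a solution at \emph{some} $\lambda' \in (\lambda^*_e(r_0), \lambda^*(r_0)]$, not at every $\lambda$ in that interval. The fix is immediate---choose such a $\lambda'$ directly---and the rest of the argument goes through unchanged since $(\lambda', \lambda' r_0) \in \mathcal{R}$ just the same.
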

\begin{corollary}
\label{Th2x}
Assume that $\alpha \geq p$, $\beta \geq q$, $\frac{\alpha}{p^*}+\frac{\beta}{q^*} < 1$  and $\nu(\Omega^0 \cup \Omega^+) = 0$. Then for all $r \in (0, +\infty)$ it holds
\begin{align*}
\lambda_1 < \lambda^*_f(r) \leq \lambda^*(r) \leq \lambda^*_e(r) \leq +\infty, \\
\mu_1 < \mu^*_f(r) \leq \mu^*(r) \leq \mu^*_e(r) \leq +\infty,
\end{align*}
or, equivalently, $(\lambda_1, \mu_1) < \Gamma_f(r) \leq \Gamma(r) \leq \Gamma_e(r) \leq +\infty$.
\end{corollary}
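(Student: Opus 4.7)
The chain of inequalities decomposes into three non-trivial steps: the strict lower bound $(\lambda_1, \mu_1) < \Gamma_f(r)$ for every $r \in (0, +\infty)$, the middle inequality $\Gamma_f(r) \leq \Gamma(r)$, and the upper bound $\Gamma(r) \leq \Gamma_e(r)$; the trailing bound $\leq +\infty$ is vacuous. The assumption $\nu(\Omega^0 \cup \Omega^+) = 0$ is equivalent to $f < 0$ a.e.\ in $\Omega$, so $F(u, v) \leq 0$ for every $(u, v)$ and $F(u, v) = 0$ precisely when $u\cdot v = 0$ a.e.\ in $\Omega$. In particular $F(\varphi_1, \psi_1) < 0$, so Theorem \ref{Th3}(3) already yields $(\lambda_1, \mu_1) < \Gamma_f(r_0)$ for some $r_0$, but the point of the corollary is to promote this to every $r$.

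I expect the main obstacle to be the strict inequality $\lambda^*_f(r) > \max\{\lambda_1, \mu_1/r\}$ for all $r > 0$, which at once gives both $\lambda^*_f(r) > \lambda_1$ and $\mu^*_f(r) = r\,\lambda^*_f(r) > \mu_1$. I would argue by contradiction: by Theorem \ref{Th3}(1), $\lambda^*_f(r) \geq \max\{\lambda_1, \mu_1/r\}$, so assume equality and pick an admissible minimizing sequence $(u_n, v_n) \in W$ in the definition \eqref{sf}. Since $f < 0$ a.e., admissibility $F(u_n, v_n) \geq 0$ collapses to $u_n v_n = 0$ a.e.\ in $\Omega$. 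Replacing $u_n, v_n$ by $|u_n|, |v_n|$ (which does not alter the Rayleigh quotients) and normalizing $\|u_n\|_p = \|v_n\|_q = 1$, the boundedness of both quotients in \eqref{sf} makes $(u_n, v_n)$ bounded in $W^{1,p}_0 \times W^{1,q}_0$. In the case $\lambda_1 \geq \mu_1/r$, the quotient $\|\nabla u_n\|_p^p / \|u_n\|_p^p$ is squeezed between $\lambda_1$ and the max, hence tends to $\lambda_1$; the compact embedding $W^{1,p}_0 \hookrightarrow L^p$ together with the simplicity of $\lambda_1$ forces $u_n \to \varphi_1/\|\varphi_1\|_p$ strongly in $L^p$, hence a.e.\ along a subsequence. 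Meanwhile $v_n \rightharpoonup v^\ast$ in $W^{1,q}_0$ and $v_n \to v^\ast$ in $L^q$ with $\|v^\ast\|_q = 1$, so $v^\ast \not\equiv 0$. Passing to the a.e.\ limit in $u_n v_n = 0$ gives $\varphi_1 \cdot v^\ast = 0$ a.e., contradicting $\varphi_1 > 0$ a.e.\ in $\Omega$. The case $\mu_1/r > \lambda_1$ is handled symmetrically, identifying the limit of $v_n$ with $\psi_1/\|\psi_1\|_q$.

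The remaining two inclusions are bookkeeping. The middle inequality $\Gamma_f(r) \leq \Gamma(r)$ follows from Theorem \ref{Th3}(5), which supplies a nonnegative $C^1$-solution for every $(\lambda, \mu) \in \Sigma^*_f$, giving $\Sigma^*_f \subseteq \Sigma^*$ and hence $\lambda^*_f(r) \leq \lambda^*(r)$, $\mu^*_f(r) \leq \mu^*(r)$ directly from the definition \eqref{lsol}. For the upper inequality, the hypotheses $\alpha \geq p$, $\beta \geq q$, $\alpha/p^* + \beta/q^* < 1$ allow one to invoke the regularity results (Lemma \ref{lem:l} and Corollaries \ref{rem:c}, \ref{rem:>0}): every nontrivial nonnegative weak solution of \eqref{D1} is then automatically a positive $C^1$-solution. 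Theorem \ref{Th1}(1) therefore excludes any such solution in $\mathcal{R}$, which translates into $\lambda^*(r) \leq \lambda^*_e(r)$ and $\mu^*(r) \leq \mu^*_e(r)$; the bound $\leq +\infty$ requires no argument, and is in fact sharp here because by Theorem \ref{Th0}(3) the value $\Gamma_e(r) = +\infty$ is possible when $p = q$.
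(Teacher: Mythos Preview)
Your argument is correct. The paper states Corollary~\ref{Th2x} without proof, as a summary of Theorems~\ref{Th3}, \ref{Th0}, \ref{Th1} together with Lemma~\ref{lemma:3}, so the substantive content is exactly the strict inequality $(\lambda_1,\mu_1) < \Gamma_f(r)$ for every $r>0$, which the paper handles in Lemma~\ref{lemma:3}(1).

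The route you take for that strict inequality differs from the paper's. The paper invokes Proposition~\ref{exCr} to obtain an actual minimizer $(u^*_r,v^*_r)$ of \eqref{sf}; if, say, $\mu^*_f(r)=\mu_1$, then necessarily $v^*_r=\psi_1$ up to scaling, and the constraint $F(u^*_r,\psi_1)\geq 0$ is impossible because $\psi_1>0$ and $f<0$ a.e. You instead bypass the existence of a minimizer and run a minimizing-sequence argument: from $u_n v_n=0$ a.e.\ and compactness you extract limits, one of which is $\varphi_1$ or $\psi_1$, and reach the same contradiction. Your approach is self-contained (it does not rely on Proposition~\ref{exCr}, whose proof is deferred to \cite{BobkovIlyasov}), at the cost of a slightly longer argument; the paper's is shorter once the minimizer is in hand. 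The remaining inequalities $\Gamma_f\leq\Gamma$ and $\Gamma\leq\Gamma_e$ are handled identically in both approaches, via Theorem~\ref{Th3}(5) and Theorem~\ref{Th1}(1) combined with the regularity/positivity results in Appendix~B.
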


\medskip
Finally, we provide an information on  nonexistence of solutions to \eqref{D1}.
\begin{lemma}
\label{Th27}
Assume that \eqref{Sob} is satisfied.
\begin{enumerate}
\item If $f \leq 0$ a.e. in $\Omega$, then \eqref{D1} has no nontrivial weak  
solutions in $\mathbb{R}^2\setminus\{ \lambda \geq \lambda_1\}\times\{\mu \geq \mu_1 \}$.
\item If $\Omega^0 \cup \Omega^+$ has nonempty interior a.e., then there exist $\widetilde{\lambda} \geq \lambda_1$ and $\widetilde{\mu} \geq \mu_1$, such that \eqref{D1} has no positive weak solutions in $\{\lambda \geq \widetilde{\lambda}\}\times\{\mu \in \mathbb{R}\}$ and $\{\lambda \in \mathbb{R}\} \times \{\mu \geq \widetilde{\mu}\}$.
\item If $f \geq 0$ a.e. in $\Omega$, then  \eqref{D1} has no positive weak solutions in $\mathbb{R}^2\setminus\{ \lambda<\lambda_1\}\times\{\mu<\mu_1 \}$.
\end{enumerate}
\end{lemma}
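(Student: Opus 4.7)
The lemma splits into three independent claims on the sign and support of $f$, each proved by testing \eqref{D1} against an explicit non-negative function and extracting a scalar inequality that forces $(\lambda,\mu)$ into the advertised region. I would prove them in the order (1), (3), (2).

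For statement (1) with $f \leq 0$, let $(u,v)$ be a nontrivial weak solution, so both $u\not\equiv 0$ and $v\not\equiv 0$. Testing the first equation of \eqref{D1} with $\xi = u$ (admissible by linearity of the weak equality in the test function) gives
\[
\int_\Omega |\nabla u|^p\,dx = \lambda \int_\Omega |u|^p\,dx + c_1 \int_\Omega f\,|u|^\alpha |v|^\beta\,dx \leq \lambda \int_\Omega |u|^p\,dx,
\]
since $f \leq 0$ and $c_1>0$ make the last integral non-positive. Combined with Poincar\'e's inequality $\int |\nabla u|^p\,dx \geq \lambda_1 \int |u|^p\,dx$, this forces $\lambda \geq \lambda_1$; testing the second equation with $\eta = v$ gives $\mu \geq \mu_1$ analogously. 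The claim follows by contraposition.

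For statement (3), assuming additionally $f \not\equiv 0$ (otherwise the system decouples and $(\varphi_1,\psi_1)$ is a positive solution at $(\lambda_1,\mu_1)$), I would apply the Picone identity for the $p$-Laplacian. By the $C^1(\overline\Omega)$-regularity supplied by \eqref{Sob} combined with the strong maximum principle, a positive weak solution $(u,v)$ is strictly positive in $\Omega$, so the function $\xi := \varphi_1^p/u^{p-1}$ lies in $W_0^{1,p}(\Omega)$. Picone's inequality $\int |\nabla u|^{p-2} \nabla u \cdot \nabla \xi\,dx \leq \int |\nabla \varphi_1|^p\,dx = \lambda_1 \int \varphi_1^p\,dx$ together with the weak formulation yields
\[
\lambda \int_\Omega \varphi_1^p\,dx + c_1 \int_\Omega f\,u^{\alpha-p} v^\beta \varphi_1^p\,dx \leq \lambda_1 \int_\Omega \varphi_1^p\,dx.
\]
Since $f \geq 0$, $f \not\equiv 0$, and $u,v,\varphi_1 > 0$, the middle integral is strictly positive and thus $\lambda < \lambda_1$. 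The symmetric argument on the second equation with $\eta := \psi_1^q / v^{q-1}$ delivers $\mu < \mu_1$.

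Statement (2) is the main point, and my plan is to \emph{localize} the Picone argument of (3) to a small ball where $f$ is non-negative. By the hypothesis on $\Omega^0 \cup \Omega^+$, there exists an open ball $B \Subset \Omega$ (after redefinition on a null set) with $f \geq 0$ on $B$. Let $\varphi_1^B$ denote the first Dirichlet eigenfunction of $-\Delta_p$ on $B$, extended by zero to $\Omega$, with corresponding eigenvalue $\lambda_1(B) > \lambda_1$ by strict domain monotonicity. Since $u \in C^1(\overline\Omega)$ is bounded below by a positive constant on $\overline B$, the test function $\xi := (\varphi_1^B)^p / u^{p-1}$ lies in $W_0^{1,p}(\Omega)$. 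Inserting it into the weak form of the first equation and applying Picone as in (3) yields
\[
\lambda \int_B (\varphi_1^B)^p\,dx + c_1 \int_B f\,u^{\alpha-p} v^\beta (\varphi_1^B)^p\,dx \leq \lambda_1(B) \int_B (\varphi_1^B)^p\,dx.
\]
The middle integral is non-negative since $f \geq 0$ on $B$, giving $\lambda \leq \lambda_1(B)$. Taking $\widetilde{\lambda}$ to be any number strictly greater than $\lambda_1(B)$ rules out positive weak solutions for $\lambda \geq \widetilde{\lambda}$ \emph{uniformly in} $\mu$, since only the first equation was used. Repeating the procedure with $\eta := (\psi_1^B)^q/v^{q-1}$ (where $\psi_1^B$ is the first Dirichlet eigenfunction of $-\Delta_q$ on $B$) produces the symmetric $\widetilde{\mu}$. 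The step I would scrutinize most carefully is verifying $\xi \in W_0^{1,p}(\Omega)$ and the validity of Picone at the regularity level afforded by \eqref{Sob}; both rest on the $C^1(\overline\Omega)$-regularity and strict positivity of $u$ on $\overline B$.
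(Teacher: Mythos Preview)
Your proof is correct and matches the paper's approach: part~(1) tests the equation against $u$ itself, and parts~(2)--(3) use Picone's identity with a test function supported where $f\geq 0$ (the paper handles both simply by pointing to the Picone computation in the proof of Theorem~\ref{Th0}(2), which employs a generic $\varphi\in C_0^\infty(B)$ rather than a first eigenfunction). One technical point you gloss over in~(3) is that $\varphi_1^p/u^{p-1}\in W_0^{1,p}(\Omega)$ requires the Hopf boundary behaviour of $u$; this does follow here since $f\geq 0$ gives $-\Delta_p u \geq \lambda\,u^{p-1}$ and V\'azquez's strong maximum principle applies, but it deserves an explicit sentence --- the paper's compactly supported test functions sidestep this issue entirely at the cost of needing an approximation to reach the sharp constant $\lambda_1$.
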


\section{On the threshold of the fibering method}
In this section we study the lower bound curve $\Gamma_f$ defined by means of \eqref{sf}.
We prove Theorem \ref{Th3} and describe precisely an asymptotic behaviour of $\Gamma_f$.

First we mention the following result on the existence of a minimizer to \eqref{sf}, which will be used further:
\begin{proposition}\label{exCr}
Assume \eqref{Sob} is satisfied. Then there exists a nonzero minimizer $(u^*_r, v^*_r) \in W$ of \eqref{sf} for any $r > 0$. Moreover, $u^*_r, v^*_r \geq 0$ in $\Omega$.
\end{proposition}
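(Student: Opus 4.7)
The key structural feature of \eqref{sf} is that its objective is zero-homogeneous separately in $u$ and in $v$: the rescaling $(u,v)\mapsto(tu,sv)$ with $t,s>0$ leaves both Rayleigh quotients unchanged and multiplies $F(u,v)$ by $t^\alpha s^\beta>0$, so the sign condition $F\geq 0$ is preserved. This lets me restrict the infimum to the set
\begin{equation*}
\mathcal{A} := \left\{(u,v)\in W^{1,p}_0\times W^{1,q}_0:~ \|u\|_{L^p(\Omega)}=\|v\|_{L^q(\Omega)}=1,~ F(u,v)\geq 0\right\},
\end{equation*}
on which the objective reduces to $J_r(u,v):=\max\bigl\{\|u\|_p^p,\,r^{-1}\|v\|_q^q\bigr\}$. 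Nonemptiness of $\mathcal{A}$ follows from standard choices: a normalization of $(\varphi_1,\psi_1)$ is admissible when $F(\varphi_1,\psi_1)\geq 0$; otherwise $\nu(\Omega^+)>0$ and normalized functions supported in $\Omega^+$ make $F$ strictly positive.

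Given a minimizing sequence $(u_n,v_n)\in\mathcal{A}$, boundedness of $J_r(u_n,v_n)$ yields uniform bounds for $\|u_n\|_p$ and $\|v_n\|_q$. Reflexivity and a diagonal extraction produce a subsequence with $u_n\rightharpoonup u^*$ in $W^{1,p}_0$ and $v_n\rightharpoonup v^*$ in $W^{1,q}_0$; since $p<p^*$ and $q<q^*$, the Rellich--Kondrachov theorem upgrades these to strong convergence in $L^p(\Omega)$ and $L^q(\Omega)$, preserving $\|u^*\|_{L^p}=\|v^*\|_{L^q}=1$, so in particular $(u^*,v^*)\in W$. Weak lower semicontinuity of $\|\cdot\|_p$ and $\|\cdot\|_q$, together with the fact that a pointwise maximum of lower semicontinuous functions is lower semicontinuous, gives $J_r(u^*,v^*)\leq \liminf_n J_r(u_n,v_n)=\lambda^*_f(r)$.

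The remaining and main analytic step is showing $F(u^*,v^*)\geq 0$, which leans decisively on the subcritical assumption $\frac{\alpha}{p^*}+\frac{\beta}{q^*}<1$ from \eqref{Sob}. I pick exponents $s\in[\alpha,p^*)$ and $t\in[\beta,q^*)$ with $\frac{\alpha}{s}+\frac{\beta}{t}\leq 1$, so that Rellich--Kondrachov yields $u_n\to u^*$ in $L^s(\Omega)$ and $v_n\to v^*$ in $L^t(\Omega)$, and consequently $|u_n|^\alpha\to|u^*|^\alpha$ in $L^{s/\alpha}(\Omega)$, $|v_n|^\beta\to|v^*|^\beta$ in $L^{t/\beta}(\Omega)$. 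Using the decomposition
\begin{equation*}
|u_n|^\alpha|v_n|^\beta - |u^*|^\alpha|v^*|^\beta = \bigl(|u_n|^\alpha-|u^*|^\alpha\bigr)|v_n|^\beta + |u^*|^\alpha\bigl(|v_n|^\beta-|v^*|^\beta\bigr),
\end{equation*}
applying H\"older's inequality with the conjugate pair $s/\alpha,\,t/\beta$ (inserting a bounded factor if $\frac{\alpha}{s}+\frac{\beta}{t}<1$), and exploiting $f\in L^\infty(\Omega)$, I conclude $F(u_n,v_n)\to F(u^*,v^*)$, so $F(u^*,v^*)\geq 0$ and $(u^*,v^*)$ attains the infimum. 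Nonnegativity is then free of charge: since $|\nabla|u||=|\nabla u|$ a.e.\ in $W^{1,p}_0$ and $F(u,v)=F(|u|,|v|)$, the pair $(|u^*|,|v^*|)\in\mathcal{A}$ is again a minimizer, and delivers the required $(u^*_r,v^*_r)$. The main obstacle is thus the continuity of $F$ along the weak limit, which is precisely where the strict subcritical growth condition is needed.
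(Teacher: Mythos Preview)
Your argument is correct and follows the standard direct method in the calculus of variations; the paper simply refers to \cite[Proposition~4.2]{BobkovIlyasov}, whose proof proceeds along essentially the same lines (normalize, extract a weakly convergent minimizing subsequence, use compactness below the critical exponent to pass to the limit in $F$, and take absolute values for the sign).

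One small gap: your nonemptiness argument for $\mathcal{A}$ is incomplete, since the dichotomy ``$F(\varphi_1,\psi_1)\geq 0$, or else $\nu(\Omega^+)>0$'' is not exhaustive---take $f\equiv -1$, where $F(\varphi_1,\psi_1)<0$ but $\Omega^+=\emptyset$. The cleaner fix, also used in the paper (proof of Theorem~\ref{Th3}(1)), is to note that any pair of nontrivial $u\in W^{1,p}_0$, $v\in W^{1,q}_0$ with disjoint supports satisfies $F(u,v)=0$, so the admissible set is nonempty without any case analysis on $f$.
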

\begin{proof}
The proof can be obtained in the same way as the proof of \cite[Proposition 4.2, p.~8]{BobkovIlyasov}.
\end{proof}

Let us prove Theorem \ref{Th3}.
\begin{proof}
(1) The lower bound follows from the estimate
\begin{align}
\notag
\lambda^*_f(r) &\geq  \inf_{(u,v) \in W} \left[ \max \left\{ \frac{\int_\Omega |\nabla u|^p \,dx}{\int_\Omega |u|^p \,dx}, \frac{1}{r} \frac{\int_\Omega |\nabla v|^q \,dx}{\int_\Omega |v|^q \,dx} \right\}\right]  \\
\label{eq:l2>l1}
&= \max \left\{ \frac{\int_\Omega |\nabla \varphi_1|^p \,dx}{\int_\Omega |\varphi_1|^p \,dx}, \frac{1}{r} \frac{\int_\Omega |\nabla \psi_1|^q \,dx}{\int_\Omega |\psi_1|^q \,dx} \right\} = 
\max \{ \lambda_1, \mu_1/r \}, \quad  r >0.
\end{align}
The upper estimate $\lambda^*_f(r) < +\infty$ is obvious, since the admissible set of \eqref{sf} is nonempty for all $r > 0$. Indeed, if $u \in W_0^{1,p}$ and $v \in W_0^{1,q}$ are nontrivial functions with disjoint supports, then $F(u, v) = 0$, and hence $(u, v)$ is an admissible point for \eqref{sf}.

(2)
It is sufficient to prove the continuity of $\lambda^*_f(r)$ on $(0, +\infty)$. Observe that  the minimization problem \eqref{sf} has identical  admissible set $\{F(u,v) \geq 0,  (u,v) \in W\}$ for all $r>0$. Furthermore, for any $(u,v) \in W$ we have: if $r\leq r_0$, then
\begin{align}
\label{monot1}
\max \left\{ \frac{\int_\Omega |\nabla u|^p \,dx}{\int_\Omega |u|^p \,dx}, \frac{1}{r}  \frac{\int_\Omega |\nabla v|^q \,dx}{\int_\Omega |v|^q \,dx} \right\}
&\geq 
\max \left\{ \frac{\int_\Omega |\nabla u|^p \, dx}{\int_\Omega |u|^p \, dx}, \frac{1}{r_0}  \frac{\int_\Omega |\nabla v|^q \, dx}{\int_\Omega |v|^q \, dx} \right\} 
\end{align}
and  
\begin{align}
\label{monot2}
\max \left\{ r \frac{\int_\Omega |\nabla u|^p \,dx}{\int_\Omega |u|^p \,dx},  \frac{\int_\Omega |\nabla v|^q \,dx}{\int_\Omega |v|^q \,dx} \right\}
&\leq 
\max \left\{r_0\frac{\int_\Omega |\nabla u|^p \,dx}{\int_\Omega |u|^p \,dx}, \frac{\int_\Omega |\nabla v|^q \,dx}{\int_\Omega |v|^q \,dx} \right\}. 
\end{align}
Hence, $\lambda^*_f(r) \geq \lambda^*_f(r_0)$ and $\mu^*_f(r) \leq \mu^*_f(r_0)$ if $r\leq r_0$. 
Therefore, at every $r_0 \in (0, +\infty)$ there exist one-sided limits of $\lambda^*_f(r)$ and $\mu^*_f(r)$, and
\begin{align*}
\lim_{r \uparrow r_0} \lambda^*_f(r) \geq &\lambda^*_f(r_0) \geq \lim_{r \downarrow r_0} \lambda^*_f(r), \\
\lim_{r \uparrow r_0} \lambda^*_f(r) \, r_0  = \lim_{r \uparrow r_0} \lambda^*_f(r) \, r \leq
&\lambda^*_f(r_0) \, r_0 \leq
 \lim_{r \downarrow r_0} \lambda^*_f(r) \,r = \lim_{r \downarrow r_0} \lambda^*_f(r) \,r_0.
\end{align*}
Comparing these chains of inequalities we see that one-sided limits are equal to the  value of $\lambda^*_f(r)$ for each $r \in (0, +\infty)$, and this fact establishes the desired continuity of $\Gamma_f$.

(3)
Assume first that $\Sigma^*_f \neq \emptyset$. This implies that there exists $r_0 > 0$ such that  $\lambda^*_f(r_0) > \lambda_1$ and $\mu^*_f(r_0) > \mu_1$. Suppose, contrary to our claim, that $F(\varphi_1, \psi_1) \geq 0$. Then $(\varphi_1, \psi_1)$ is an admissible point for the minimiration problem \eqref{sf}. However 
$$
\max \left\{ \frac{\int_\Omega |\nabla \varphi_1|^p \, dx}{\int_\Omega |\varphi_1|^p \, dx}, \frac{1}{r_0}  \frac{\int_\Omega |\nabla \psi_1|^q \, dx}{\int_\Omega |\psi|^q \, dx} \right\}=\max\{ \lambda_1, \mu_1/r_0 \}
$$
and therefore $\lambda^*_f(r_0) \leq \max\{ \lambda_1, \mu_1/r_0 \}$ and $\mu^*_f(r_0) \leq \max\{ \lambda_1 \, r_0, \mu_1 \}$. These facts lead to a contradiction.

Assume now $F(\varphi_1, \psi_1) < 0$. On the contrary, suppose that $\Sigma^*_f = \emptyset$. Then  $\lambda^*_f(r) = \max\{ \lambda_1, \mu_1/r\}$ for all $r>0$ and, 
in particular, $\lambda^*_f(\mu_1/\lambda_1) = \max\{ \lambda_1, \lambda_1\}= \lambda_1$. 
Hence, Proposition \ref{exCr} implies the existence of $(u,v) \in \{F(u,v) \geq 0,  (u,v) \in W\}$ such that
$$
\frac{\int_\Omega |\nabla u|^p \, dx}{\int_\Omega |u|^p \, dx} = \lambda_1 \quad \mbox{and} \quad  \frac{\int_\Omega |\nabla v|^q \, dx}{\int_\Omega |v|^q \, dx} = \mu_1.
$$
These equalities are true if and only if $u = \varphi_1$ and $v = \psi_1$ up to multipliers. However this yields a contradiction, since $F(\varphi_1, \psi_1) < 0$ by the assumption. 

(4) 
Monotonicity directly follows from \eqref{monot1} and \eqref{monot2}.

(5)
In order to prove the existence of a weak solution for \eqref{D1} in $\Sigma^*_f$ let us note that problem \eqref{D1} has the variational form with the corresponding energy functional
\begin{align*}
\notag
E_{\lambda, \mu}(u,v) &= \frac{\alpha}{c_1 p} \left( \int_\Omega |\nabla u|^p \, dx - \lambda \int_\Omega |u|^p \, dx \right) \\
%\label{ener}
&+\frac{\beta}{c_2 q} \left( \int_\Omega |\nabla v|^q \, dx- \mu \int_\Omega |v|^q \, dx \right) - \int_\Omega f(x)|u|^{\alpha}|v|^{\beta}\, dx.
\end{align*}
Moreover, Proposition \ref{semivar} implies that without loss of generality and for simplicity of notations we can deal with the case $c_1 = \alpha$, $c_2 = \beta$.

Following \cite{BobkovIlyasov}, we look for a weak nonnegative solution to \eqref{D1} as minimizer of the problem
\begin{equation}
\label{eq:c_lambda}
n_{\lambda, \mu} := \inf \{ E_{\lambda, \mu}(u,v): (u,v) \in \mathcal{N}_{\lambda, \mu} \},
\end{equation}
where 
\begin{align*}
\mathcal{N}_{\lambda, \mu} := \{(u,v) \in W:
~&P_{\lambda, \mu}(u, v) := \left< D_u E_{\lambda, \mu}(u,v), u \right> = 0, \\
 &Q_{\lambda, \mu}(u, v) := \left< D_v E_{\lambda, \mu}(u,v), v \right> = 0 \}
\end{align*}
is the Nehari manifold. Consider the Hessian of $E_{\lambda, \mu}(u,v)$:
$$
\mathcal{H}_{\lambda, \mu}(u,v) =  \left( \begin{array}{cc}
\left< D_{u}P_{\lambda, \mu}(u,v), u \right> & \left< D_{v}P_{\lambda, \mu}(u,v), v \right>  \\[0.8em]
\left< D_{u}Q_{\lambda, \mu}(u,v), u \right> & \left< D_{v}Q_{\lambda, \mu}(u,v), v \right>
\end{array} \right).
$$

The following two lemmas are the basis of the spectral analysis by the fibering method  (see \cite{BobkovIlyasov, ilIZV}).

\begin{lemma}\label{LS1}
Let $(\lambda, \mu) \in \mathbb{R}^2$ and  $(u_0, v_0) \in \mathcal{N}_{\lambda, \mu}$ be a minimization point of \eqref{eq:c_lambda} such that 
$$
\mbox{det}\, \mathcal{H}_{\lambda, \mu} (u_0, v_0) \neq 0.	
$$
Then $(u_0, v_0)$ is a critical point of $E_{\lambda, \mu}(u, v)$, i.e. a weak solution of \eqref{D1}.
\end{lemma}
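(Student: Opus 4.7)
The plan is to apply the Lagrange multiplier theorem to the constrained minimization problem \eqref{eq:c_lambda}, where the constraints are $P_{\lambda,\mu}(u,v)=0$ and $Q_{\lambda,\mu}(u,v)=0$, and then to use the nondegeneracy of $\mathcal{H}_{\lambda,\mu}(u_0,v_0)$ to conclude that both multipliers vanish. The assumption $\det\mathcal{H}_{\lambda,\mu}(u_0,v_0)\neq 0$ will play two roles: first, it guarantees the linear independence of the differentials $DP_{\lambda,\mu}(u_0,v_0)$ and $DQ_{\lambda,\mu}(u_0,v_0)$, so that $(u_0,v_0)$ is a regular point of the constraint map and $\mathcal{N}_{\lambda,\mu}$ is locally a $C^1$-submanifold near $(u_0,v_0)$; second, it will allow us to invert a $2\times 2$ system that forces the Lagrange multipliers to be zero.

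First, I would verify that $E_{\lambda,\mu}$, $P_{\lambda,\mu}$, $Q_{\lambda,\mu}$ are $C^1$ on $W_0^{1,p}\times W_0^{1,q}$ under the subcriticality assumption \eqref{Sob}; this is standard, using the compact embeddings provided by $\alpha/p^\ast+\beta/q^\ast<1$. Then, by the standard Lagrange multiplier principle for constrained minimizers, there exist $\tau_1,\tau_2\in\mathbb{R}$ such that
\begin{equation*}
D E_{\lambda,\mu}(u_0,v_0) \;=\; \tau_1\,D P_{\lambda,\mu}(u_0,v_0) + \tau_2\,D Q_{\lambda,\mu}(u_0,v_0).
\end{equation*}
The task reduces to showing $\tau_1=\tau_2=0$.

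To do this, I would test the above identity against the two admissible directions $(u_0,0)$ and $(0,v_0)$. Using $\langle DE_{\lambda,\mu}(u_0,v_0),(u_0,0)\rangle=P_{\lambda,\mu}(u_0,v_0)=0$ and $\langle DE_{\lambda,\mu}(u_0,v_0),(0,v_0)\rangle=Q_{\lambda,\mu}(u_0,v_0)=0$ (which is precisely the Nehari condition $(u_0,v_0)\in\mathcal{N}_{\lambda,\mu}$), the identity yields the homogeneous linear system
\begin{equation*}
\begin{pmatrix}\langle D_u P_{\lambda,\mu}(u_0,v_0),u_0\rangle & \langle D_u Q_{\lambda,\mu}(u_0,v_0),u_0\rangle\\[0.4em]
\langle D_v P_{\lambda,\mu}(u_0,v_0),v_0\rangle & \langle D_v Q_{\lambda,\mu}(u_0,v_0),v_0\rangle\end{pmatrix}\begin{pmatrix}\tau_1\\ \tau_2\end{pmatrix}=\begin{pmatrix}0\\ 0\end{pmatrix}.
\end{equation*}
The coefficient matrix is the transpose of $\mathcal{H}_{\lambda,\mu}(u_0,v_0)$, so by the hypothesis $\det\mathcal{H}_{\lambda,\mu}(u_0,v_0)\neq 0$ the only solution is $\tau_1=\tau_2=0$. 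Consequently $D E_{\lambda,\mu}(u_0,v_0)=0$ in $(W_0^{1,p}\times W_0^{1,q})^\ast$, which by \eqref{defin} means that $(u_0,v_0)$ is a weak solution of \eqref{D1}.

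The only delicate point is ensuring that the Lagrange multiplier rule applies, i.e.\ that $DP_{\lambda,\mu}(u_0,v_0)$ and $DQ_{\lambda,\mu}(u_0,v_0)$ are linearly independent at $(u_0,v_0)$. This is exactly what $\det\mathcal{H}_{\lambda,\mu}(u_0,v_0)\neq 0$ provides, since the two rows of $\mathcal{H}$ are the values of these differentials on the independent pair $(u_0,0)$, $(0,v_0)$; a nonzero linear combination of them could not annihilate both test pairs. Thus there is no real obstacle beyond choosing the right two test directions and reading off the resulting $2\times 2$ system as $\mathcal{H}^{\top}$.
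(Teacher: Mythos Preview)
Your proposal is correct and follows essentially the same approach as the paper, which simply states that the proof is obtained by the Lagrange multiplier rule (referring to \cite[Lemma 3.1, p.~6]{BobkovIlyasov}). Your write-up in fact spells out the argument in more detail than the paper does: applying the multiplier rule, testing against $(u_0,0)$ and $(0,v_0)$, and using $\det\mathcal{H}_{\lambda,\mu}(u_0,v_0)\neq 0$ both for the regularity of the constraint and to force the multipliers to vanish.
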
 
\begin{proof}
The proof is obtained by Lagrange multiplier rule, cf. \cite[Lemma 3.1 p.~6]{BobkovIlyasov}.
\end{proof}
\begin{lemma}\label{DCor}
Assume \eqref{Sob} is satisfied, $p,q \in (1, +\infty)$ and $f \in L^{\infty}(\Omega)$. If  $(\lambda, \mu) \in \Sigma^*_f$, then 
$\mbox{det}\, \mathcal{H}_{\lambda, \mu} (u, v) \neq 0$
for any $(u,v) \in \mathcal{N}_{\lambda, \mu}$.
\end{lemma}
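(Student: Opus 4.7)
The plan is to reduce the non-vanishing of $\det \mathcal{H}_{\lambda,\mu}(u,v)$ on $\mathcal{N}_{\lambda,\mu}$ to the single condition $F(u,v) \neq 0$, and then to rule out $F(u,v) = 0$ by comparing $(u,v)$ against the minimization problem \eqref{sf} that defines $\Sigma^*_f$.

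For the first step I would differentiate $P_{\lambda,\mu}$ and $Q_{\lambda,\mu}$ explicitly (using the normalization $c_1 = \alpha$, $c_2 = \beta$ justified in the proof of Theorem \ref{Th3}(5)). A direct computation yields
\begin{align*}
\langle D_u P_{\lambda,\mu}(u,v), u\rangle &= p\int_\Omega |\nabla u|^p\, dx - p\lambda \int_\Omega |u|^p\, dx - \alpha^2 F(u,v), \\
\langle D_v Q_{\lambda,\mu}(u,v), v\rangle &= q\int_\Omega |\nabla v|^q\, dx - q\mu \int_\Omega |v|^q\, dx - \beta^2 F(u,v), \\
\langle D_v P_{\lambda,\mu}(u,v), v\rangle &= \langle D_u Q_{\lambda,\mu}(u,v), u\rangle = -\alpha\beta\, F(u,v).
\end{align*}
On $\mathcal{N}_{\lambda,\mu}$ the identities $\int |\nabla u|^p\, dx - \lambda \int |u|^p\, dx = \alpha F(u,v)$ and $\int |\nabla v|^q\, dx - \mu \int |v|^q\, dx = \beta F(u,v)$ collapse the diagonal entries to $\alpha(p-\alpha)F(u,v)$ and $\beta(q-\beta)F(u,v)$ respectively, so that expanding the $2\times 2$ determinant gives
\begin{equation*}
\det \mathcal{H}_{\lambda,\mu}(u,v) = \alpha\beta\, pq\left(1 - \frac{\alpha}{p} - \frac{\beta}{q}\right) F(u,v)^2.
\end{equation*}
Under \eqref{Sob} the bracket is strictly negative, so this determinant vanishes if and only if $F(u,v) = 0$.

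It remains to show that $F(u,v) \neq 0$ for every $(u,v) \in \mathcal{N}_{\lambda,\mu}$ whenever $(\lambda,\mu) \in \Sigma^*_f$. Suppose for contradiction that $F(u,v) = 0$; the Nehari identities then force $\lambda = \int |\nabla u|^p\, dx / \int |u|^p\, dx$ and $\mu = \int |\nabla v|^q\, dx / \int |v|^q\, dx$, hence $u, v \not\equiv 0$, and $(u,v)$ is admissible (with $F(u,v) = 0 \geq 0$) in the minimization problem \eqref{sf} for every $r > 0$. By the definition of $\Sigma^*_f$ one can pick $r_0 > 0$ with $\lambda < \lambda^*_f(r_0)$ and $\mu < \mu^*_f(r_0) = \lambda^*_f(r_0)\,r_0$, which gives
\begin{equation*}
\lambda^*_f(r_0) \leq \max\left\{ \frac{\int_\Omega |\nabla u|^p\, dx}{\int_\Omega |u|^p\, dx},\; \frac{1}{r_0}\frac{\int_\Omega |\nabla v|^q\, dx}{\int_\Omega |v|^q\, dx} \right\} = \max\{\lambda,\, \mu/r_0\} < \lambda^*_f(r_0),
\end{equation*}
the desired contradiction.

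The main obstacle is the algebraic bookkeeping in the Hessian step: one must check that, after substituting the Nehari relations, the diagonal entries — like the off-diagonal ones — become scalar multiples of $F(u,v)$, so that the determinant factors cleanly as a constant times $F(u,v)^2$. Once this reduction is in hand, the fact that $\Sigma^*_f$ is defined as a strict sublevel set of the variational quantity in \eqref{sf} immediately yields $F(u,v) \neq 0$ with essentially no further work.
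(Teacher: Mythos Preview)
Your proof is correct and is exactly the ``direct generalization of \cite[Corollary 4.5]{BobkovIlyasov}'' that the paper invokes: the Hessian determinant on $\mathcal{N}_{\lambda,\mu}$ collapses to $\alpha\beta\,pq\,(1-\tfrac{\alpha}{p}-\tfrac{\beta}{q})\,F(u,v)^2$, and $F(u,v)=0$ is then excluded by testing $(u,v)$ against the variational definition \eqref{sf} of $\lambda^*_f(r_0)$. One small quibble: the clause ``hence $u,v\not\equiv 0$'' does not follow from the Nehari identities as written (they are trivially satisfied when a component vanishes); you should instead note that the lemma is tacitly meant for pairs with both components nonzero --- as is the case for the minimizers of \eqref{eq:c_lambda} to which it is applied --- since semitrivial pairs $(0,v)$ or $(u,0)$ would otherwise give $\det\mathcal{H}=0$ and falsify the statement.
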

\begin{proof}
The proof can be obtained by direct generalization of \cite[Corollary 4.5, p.~9]{BobkovIlyasov}
\end{proof}

Thus, these lemmas ensure to find a weak solution of \eqref{D1} by means of the minimization problem \eqref{eq:c_lambda}, whenever $(\lambda,\mu) \in \Sigma^*_f$. 
Moreover, it can be established analogically to the proof of \cite[Proposition 4.2, p.~8]{BobkovIlyasov} that \eqref{eq:c_lambda} indeed possesses a minimizer  $(u, v) \in \mathcal{N}_{\lambda, \mu}$, which is therefore a weak solution of \eqref{D1}.
The desired regularity of $u$, $v$ follows from Lemma \ref{lem:l} and Corollary \ref{rem:c}.
\end{proof}

\begin{remark}
Direct usage of \eqref{eq:c_lambda} for obtaining weak solutions to \eqref{D1} in the case $(\lambda, \mu) \not\in \Sigma^*_f$ is not possible, in general, since one can face with $\mbox{det}\, \mathcal{H}_{\lambda, \mu} (u, v) = 0$ for minimizer $(u,v)$ of \eqref{eq:c_lambda}. 
Therefore we call $\Sigma^*_f$ the maximal domain of applicability of the Nehari manifold and fibering methods in quadrant $\{\lambda >\lambda_1 \} \times \{\mu > \mu_1 \}$. 
%
%the \textit{threshold of the Nehari manifold and fibering method} or shortly the \textit{threshold of the fibering method}. 
\end{remark}

%(5) 
%Lemmas \ref{LS1}, \ref{DCor} will imply the existence of a weak solution of \eqref{D1} with $(\lambda, \mu) \in \Sigma^*_f$ if we show that \eqref{eq:c_lambda} possesses a minimizer  $(u, v) \in \mathcal{N}_{\lambda, \mu}$. However, this fact can be  established analogically to the proof of \cite[Proposition 4.2, p.~8]{BobkovIlyasov}. 
%%by showing that the minimizing sequence of  \eqref{eq:c_lambda} has a limit point $(u_{\lambda, \mu}, v_{\lambda, \mu})$.  

Let us now study the asymptotic behavior of $\Gamma_f$.
Introduce the following critical values
\begin{equation}
\label{lsms}
\lambda^*_s := \inf_{u \in W_0^{1,p} \setminus \{0\}} \left\{ \frac{\int_\Omega |\nabla u|^p \, dx}{\int_\Omega |u|^p \, dx}: F(u, \psi_1) \geq 0 \right\}, \quad 
\mu^*_s := \inf_{v \in W_0^{1,q} \setminus \{0\}} \left\{ \frac{\int_\Omega |\nabla v|^q \, dx}{\int_\Omega |v|^q \, dx}: F(\varphi_1, v) \geq 0 \right\},
\end{equation}
and define $r_0 := \mu_1/\lambda^*_s$, $r_1 := \mu^*_s/\lambda_1$.
\begin{lemma}
\label{lemma:2}
Let $F(\varphi_1, \psi_1) < 0$ and $\Omega^+ \cup \Omega^0$ has nonempty interior a.e. Then (see Fig.~1)
\begin{enumerate}
\item $\lambda^*_f(r) = \mu_1 / r$, $\mu^*_f(r) = \mu_1$ for all $r \in (0, r_0]$;
\item $\lambda^*_f(r) = \lambda_1$, $\mu^*_f(r) = \lambda_1 r$ for all $r \in [r_1, +\infty)$;
\item $\lambda^*_f(r) > \lambda_1$, $\mu^*_f(r) > \mu_1$ for all $r \in (r_0, r_1)$.
\end{enumerate}
\end{lemma}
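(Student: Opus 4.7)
The plan is to combine the general lower bound $\lambda^*_f(r)\geq \max\{\lambda_1,\mu_1/r\}$ from Theorem~\ref{Th3}(1) with carefully chosen admissible test pairs for the upper bounds in parts (1) and (2), and to establish part (3) by contradiction using the minimizer from Proposition~\ref{exCr}. A preliminary step is needed: the scalar variational problems in \eqref{lsms} admit nonnegative minimizers $u^*_s\in W^{1,p}_0$ and $v^*_s\in W^{1,q}_0$, by a direct-method argument parallel to Proposition~\ref{exCr}. Their admissible sets contain nonzero elements because the open subset of $\Omega^+\cup\Omega^0$ supports test functions yielding $F=0$. Moreover $\lambda^*_s>\lambda_1$ and $\mu^*_s>\mu_1$: otherwise, equality in the Rayleigh quotient together with the simplicity of the first eigenvalue would force $u^*_s=c\varphi_1$ (or $v^*_s=d\psi_1$), yielding $F(\varphi_1,\psi_1)\geq 0$ and contradicting the hypothesis. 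In particular $r_0<\mu_1/\lambda_1<r_1$.

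For part~(1), when $r\in(0,r_0]$ we have $\mu_1/r\geq\lambda^*_s>\lambda_1$, so Theorem~\ref{Th3}(1) gives $\lambda^*_f(r)\geq \mu_1/r$. The pair $(u^*_s,\psi_1)$ is admissible in \eqref{sf} since $F(u^*_s,\psi_1)\geq 0$, and on it the functional equals $\max\{\lambda^*_s,\mu_1/r\}=\mu_1/r$; hence $\lambda^*_f(r)\leq \mu_1/r$ and equality follows, giving $\mu^*_f(r)=\mu_1$. Part~(2) is entirely symmetric: for $r\geq r_1$ the lower bound is $\lambda^*_f(r)\geq\lambda_1$, and the test pair $(\varphi_1,v^*_s)$ attains $\max\{\lambda_1,\mu^*_s/r\}=\lambda_1$.

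For part~(3), I would argue by contradiction. Fix $r\in(r_0,r_1)$; since Theorem~\ref{Th3}(1) already yields $\lambda^*_f(r)\geq \max\{\lambda_1,\mu_1/r\}$, it suffices to exclude the equality $\lambda^*_f(r)=\max\{\lambda_1,\mu_1/r\}$. Suppose such equality holds and, by Proposition~\ref{exCr}, pick a nonnegative minimizer $(u^*_r,v^*_r)$ with $F(u^*_r,v^*_r)\geq 0$; a short trimming reduction ensures both components are nonzero, so the quotients in \eqref{sf} are well defined. If $\lambda^*_f(r)=\lambda_1\geq\mu_1/r$, the first quotient equals $\lambda_1$, so by simplicity $u^*_r=c\varphi_1$ with $c>0$, while $\int|\nabla v^*_r|^q/\int|v^*_r|^q\leq\lambda_1 r<\mu^*_s$ since $r<r_1$; but $F(\varphi_1,v^*_r)\geq 0$ makes $v^*_r$ admissible in \eqref{lsms}, forcing $\int|\nabla v^*_r|^q/\int|v^*_r|^q\geq\mu^*_s$, a contradiction. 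The case $\lambda^*_f(r)=\mu_1/r>\lambda_1$ is treated analogously, using $r>r_0$ and the characterization of $\lambda^*_s$. The borderline $\lambda_1=\mu_1/r$ collapses both components to multiples of $\varphi_1$ and $\psi_1$, whence $F(u^*_r,v^*_r)$ is a positive multiple of $F(\varphi_1,\psi_1)<0$, again contradicting admissibility.

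The main obstacle I anticipate is twofold. First, I need the minimizer from Proposition~\ref{exCr} to have both components nonzero so that the quotients in \eqref{sf} are well defined; this requires a short but careful argument ruling out the ``semi-trivial'' cases, essentially a consequence of the constraint $F(u^*_r,v^*_r)\geq 0$ together with the lower bound of Theorem~\ref{Th3}(1). Second, the implication ``equality in the Rayleigh quotient forces proportionality to the first eigenfunction'' relies on the simplicity and isolation of $\lambda_1,\mu_1$ together with the sign constraint $u^*_r,v^*_r\geq 0$; once these ingredients are in place the case analysis is routine and the asymptotic picture of $\Gamma_f$ in Fig.~1 follows at once.
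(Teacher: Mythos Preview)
Your proposal is correct and follows essentially the same approach as the paper: test pairs $(u^*_s,\psi_1)$ and $(\varphi_1,v^*_s)$ for the upper bounds in (1)--(2), and for (3) identify one component of the minimizer as a first eigenfunction and use the other as an admissible point for the scalar problem \eqref{lsms}. The paper organizes (3) slightly differently---proving $\mu^*_f(r)>\mu_1$ for all $r>r_0$ and $\lambda^*_f(r)>\lambda_1$ for all $r<r_1$ as two separate one-sided claims rather than your unified case split on which term in $\max\{\lambda_1,\mu_1/r\}$ dominates---but the core argument is identical.
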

\begin{proof}
(1) It is not hard to show that under the assumptions of the lemma $\lambda^*_s$ defined by \eqref{lsms} is finite (cf. the proof of \cite[Lemma 3.1, p.~34]{ilIZV}), 
with a corresponding minimizer $u^* \in W_0^{1,p}$ and $F(u^*, \psi_1) \geq 0$.
Taking $(u^*, \psi_1)$ as an admissible point for $\lambda^*_f(r)$ and noting \eqref{eq:l2>l1}, we get
$$
\frac{\mu_1}{r} \leq \lambda^*_f(r) \leq \max \left\{ \frac{\int_\Omega |\nabla u^*|^p \, dx}{\int_\Omega |u^*|^p \, dx}, \frac{1}{r} \frac{\int_\Omega |\nabla \psi_1|^q \, dx}{\int_\Omega |\psi_1|^q \, dx} \right\} = \max \left\{ \lambda^*_s, \frac{\mu_1}{r} \right\} \leq \frac{\mu_1}{r}
$$
for any $r \in (0, \mu_1/\lambda^*_s]$.  Hence, for such $r$ we have $\lambda^*_f(r) = \mu_1/r$ and consequently $\mu^*_f(r) = \mu_1$.

Statement (2) can be handled in much the same way. 

(3) Note that from statement (1) of Theorem \ref{Th3} we have $\lambda^*_f(r)\geq \lambda_1$ and $\mu^*_f(r)\geq \mu_1$. Suppose, contrary to our claim, that $\mu^*_f(r) = \mu_1$ for some $r > r_0=\mu_1/\lambda^*_s$. 
Then, in view of Proposition \ref{exCr}, there exists $u^*_r \in W_0^{1,p}$ such that $(u^*_r, \psi_1)$ is a minimizer of $\mu^*_f(r)$ and $F(u^*_r, \psi_1) \geq 0$.
Therefore, $u^*_r$ is an admissible point for $\lambda^*_s$, and
$$
\lambda^*_s \leq \frac{\int_\Omega |\nabla u^*_r|^p \,dx}{\int_\Omega |u^*_r|^p \,dx} \leq \lambda^*_f(r) =
\max \left\{ \frac{\int_\Omega |\nabla u^*_r|^p \,dx}{\int_\Omega |u^*_r|^p \,dx}, \frac{\mu_1}{r}  \right\} =  \frac{\mu_1}{r}.
$$
Hence, $r \leq \mu_1/\lambda^*_s$, but it contradicts our assumption.

By the same arguments it can be shown that $\lambda^*_f(r) > \lambda_1$ for any $r < r_1$. Thus, we conclude that  $\lambda^*_f(r) > \lambda_1$, $\mu^*_f(r) > \mu_1$ for all $r \in (r_0, r_1)$.
\end{proof}

\begin{lemma}
\label{lemma:3}
Let $\nu(\Omega^+ \cup \Omega^0) = 0$. Then (see Fig.~2)
\begin{enumerate}
\item $\lambda^*_f(r) > \lambda_1$, $\mu^*_f(r) > \mu_1$ for all $r > 0$;
\item $\lambda^*_f(r) \to \lambda_1$, $\mu^*_f(r) \to +\infty$ as $r \to +\infty$;
\item $\lambda^*_f(r) \to +\infty$, $\mu^*_f(r) \to \mu_1$ as $r \to 0$.
\end{enumerate}
\end{lemma}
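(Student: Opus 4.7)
The plan rests on one simple translation of the hypothesis. Under $\nu(\Omega^+ \cup \Omega^0) = 0$, $f < 0$ almost everywhere in $\Omega$, so for every $(u, v) \in W$ one has $F(u, v) \le 0$, with equality iff $|u|^\alpha |v|^\beta = 0$ a.e., i.e.\ iff the supports of $u$ and $v$ are essentially disjoint. Thus the feasible set in \eqref{sf} is precisely the class of disjointly supported pairs. Combined with statements (1) and (4) of Theorem \ref{Th3} (the a priori bound $\lambda^*_f(r) \ge \max\{\lambda_1, \mu_1/r\}$ and the monotonicity that ensures the one-sided limits in parts (2)--(3) exist), all three assertions reduce to the interplay between a rigidity argument for the minimizer and explicit test-pair constructions.

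For part (1) I would invoke Proposition \ref{exCr} to pick a minimizer $(u^*, v^*) \in W$ of \eqref{sf}; under the standing convention that the max in \eqref{sf} equals $+\infty$ when either component vanishes, both $u^*$ and $v^*$ are nontrivial. Then $u^*$ must vanish on the positive-measure set $\mathrm{supp}(v^*)$ (and vice versa). If the $p$-Rayleigh quotient of $u^*$ were equal to $\lambda_1$, simplicity of $\lambda_1$ would force $u^* = c\varphi_1$ for some $c > 0$, contradicting $\varphi_1 > 0$ throughout $\Omega$. Hence this quotient is strictly $> \lambda_1$, and symmetrically the $q$-Rayleigh quotient of $v^*$ is strictly $> \mu_1$. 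Taking the maximum and multiplying by $r$ delivers $\lambda^*_f(r) > \max\{\lambda_1, \mu_1/r\}$ and $\mu^*_f(r) = r\lambda^*_f(r) > \max\{r\lambda_1, \mu_1\}$, hence both strict inequalities.

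For part (2) I would construct admissible trial pairs depending on a parameter $\epsilon > 0$. Let $\Omega_\epsilon := \{x \in \Omega : \mathrm{dist}(x, \partial\Omega) > \epsilon\}$, take $u_\epsilon$ to be the zero extension of the first Dirichlet $p$-eigenfunction on $\Omega_\epsilon$ (whose $p$-Rayleigh quotient is exactly $\lambda_1(\Omega_\epsilon)$), and pair it with any nontrivial $v_\epsilon \in W^{1,q}_0(\Omega)$ compactly supported in the boundary layer $\Omega \setminus \overline{\Omega_\epsilon}$. Disjointness of supports makes the pair admissible, giving
\[
\lambda^*_f(r) \le \max\left\{\lambda_1(\Omega_\epsilon),\ \frac{1}{r}\,\frac{\int_\Omega |\nabla v_\epsilon|^q\, dx}{\int_\Omega |v_\epsilon|^q\, dx}\right\}.
\]
For each fixed $\epsilon$ the second term vanishes as $r \to +\infty$; letting then $\epsilon \to 0^+$ and invoking the classical interior-approximation continuity $\lambda_1(\Omega_\epsilon) \to \lambda_1$ forces $\lambda^*_f(r) \to \lambda_1$. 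The divergence $\mu^*_f(r) \to +\infty$ is immediate from $\mu^*_f(r) \ge r\lambda_1$. Part (3) is the mirror image: swap $u \leftrightarrow v$, $p \leftrightarrow q$, $\varphi_1 \leftrightarrow \psi_1$; the analogous construction and $r \to 0^+$ give $\mu^*_f(r) \to \mu_1$, while $\lambda^*_f(r) = \mu^*_f(r)/r > \mu_1/r \to +\infty$ by part (1).

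The main obstacle is the rigidity step in part (1), specifically the need to ensure that the minimizer from Proposition \ref{exCr} has both components genuinely nontrivial; only then does the disjoint-support constraint excise a positive-measure chunk of $\Omega$, and only then can the simplicity and strict positivity of $\varphi_1$ be used to upgrade the Rayleigh quotient bound from $\ge$ to $>$. Parts (2)--(3) are then soft, depending only on monotone continuity of the first Dirichlet eigenvalues of the $p$- and $q$-Laplacians under interior approximations $\Omega_\epsilon \nearrow \Omega$.
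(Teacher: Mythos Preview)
Your proof is correct and follows essentially the same strategy as the paper: part~(1) via rigidity of the minimizer from Proposition~\ref{exCr} (the paper phrases the contradiction directly through $F(u,\psi_1)<0$ once $v^*=\psi_1$, you through the disjoint-support observation, but both reduce to simplicity and strict positivity of the first eigenfunctions), and parts~(2)--(3) via disjointly-supported test pairs. The only notable variant is in the construction for part~(2): the paper approximates $\varphi_1$ by $u_k\in C^\infty_0(\Omega)$ (density, hence $R_p(u_k)\to\lambda_1$) and pairs it with the first $q$-eigenfunction on an open set inside $\Omega\setminus\mathrm{supp}\,u_k$, whereas you take the first $p$-eigenfunction on $\Omega_\epsilon$ and invoke the interior-approximation continuity $\lambda_1(\Omega_\epsilon)\to\lambda_1$; the paper's route is marginally more self-contained, but both are standard and valid.
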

\begin{proof}
(1) Suppose the assertion is false and, without loss of generality, $\mu^*_f(r) = \mu_1$ for some $r > 0$. Then using Proposition \ref{exCr} we can find $u^*_r \in W_0^{1,p}$ such that  $(u^*_r, \psi_1)$ is a nonzero minimizer of $\mu^*_f(r)$  and $F(u^*_r, \psi_1) \geq 0$. However, assumption $\nu(\Omega^+ \cup \Omega^0) = 0$ implies $F(u, \psi_1) < 0$ for any nontrivial $u \in W_0^{1,p}$. Thus we get a contradiction.
 In the same way it can be shown that $\lambda^*_f(r) > \lambda_1$ for all $r > 0$.

(2) The basic idea is to construct a suitable admissible point for minimization problem \eqref{sf}.  By definition of $W_0^{1,p}(\Omega)$ there exists $u_k \in C^\infty_0(\Omega)$, $k \in \mathbb{N}$, such that $u_k \to \varphi_1$ in $W^{1,p}(\Omega)$, that is,
$$
\int_\Omega |\nabla u_k|^p \,dx \to 
\int_\Omega |\nabla \varphi_1|^p \,dx \quad \mbox{and} \quad
\int_\Omega |u_k|^p \,dx \to 
\int_\Omega |\varphi_1|^p \,dx \quad \mbox{as } k \to +\infty.
$$
From here it follows, that
\begin{equation}
\label{eq:lk}
\lambda(u_k) := \frac{\int_\Omega |\nabla u_k|^p \,dx}{\int_\Omega |u_k|^p \,dx} \to \lambda_1, \quad k \to +\infty.
\end{equation}
Since $u_k$ has a  compact support in $\Omega$, there exists an open set $B_k$, such that $B_k \subset \Omega \setminus \mbox{supp}\,u_k$ for each $k \in \mathbb{N}$, and we denote by $\left(\mu_1(B_k), \psi_1(B_k) \right)$ the first eigenpair of $-\Delta_q$  in $B_k$ with zero Dirichlet boundary conditions. 
By construction, $\mbox{supp}\, u_k \cap \mbox{supp}\, \psi_1(B_k) = \emptyset$ for any $k \in \mathbb{N}$, and hence $F(u_k, \psi_1(B_k)) = 0$. Therefore, $(u_k, \psi_1(B_k))$ is an admissible point for $\lambda^*_f(r)$ and for all $r > \mu_1(B_k) / \lambda(u_k)$ we have
$$
\lambda_1 < \lambda^*_f(r) \leq \max \left\{ \lambda(u_k), \frac{\mu_1(B_k)}{r} \right\} \leq \lambda(u_k),
$$
where the first inequality is obtained from (1). Combining this fact with \eqref{eq:lk}, we deduce that $\lambda^*_f(r) \to \lambda_1$ and $\mu^*_f(r) \to +\infty$ as $r \to +\infty$.

The same method can be applied to prove statement (3).
\end{proof}

\section{Extended functional critical points}
In this section, we study the upper bound $\Gamma_e$ and prove Theorems \ref{Th0}, \ref{Th1}, \ref{Th27} and Lemma \ref{invar}.

\noindent
\textit{Proof of Theorem \ref{Th0}}.
(1) 
Let $\varphi \in W_0^{1,p}$ and $\psi \in W_0^{1,q}$ be weak solutions of
$$
\left\{
\begin{aligned}
-\Delta_p \varphi &= 1,  &&x \in \Omega, \\
\left. \varphi \right|_{\partial \Omega} &= 0,
\end{aligned}
\right.
\quad \mbox{and} \quad
\left\{
\begin{aligned}
-\Delta_q \psi &= 1,  &&x \in \Omega, \\
\left. \psi \right|_{\partial \Omega} &= 0,
\end{aligned}
\right.
$$
respectively.
Note that such solutions exist due to the coercivity of the corresponding energy functionals and their weakly lower semicontinuity on $W_0^{1,p}$ and $W_0^{1,q}$, respectively. Using the standard bootstrap arguments (cf. \cite[Lemma 3.2, p.~114]{drabekbook}) we get $\varphi, \psi \in L^\infty(\Omega)$. Hence $\varphi, \psi \in C^{1, \delta}(\overline{\Omega})$ with some $\delta \in (0,1)$ by \cite{Lieberman} and $\varphi, \psi > 0$ in $\Omega$ by \cite{vazquez1984}. Therefore, $\varphi, \psi \in S$ and for all $\xi, \eta \in \overline{S}$ we have
\begin{equation}
\label{sigm>inf1}
\int_\Omega |\nabla \varphi|^{p-2} \nabla \varphi \nabla\xi \, dx = 
\int_\Omega \xi \, dx \quad \mbox{and} \quad 
\int_\Omega |\nabla \psi|^{q-2} \nabla \psi \nabla\eta \, dx = 
\int_\Omega \eta \, dx.
\end{equation}
At the same time, there exists a constant $C_1 \in \mathbb{R}$ (possibly negative) such that
\begin{align}
\notag
\int_\Omega \xi \, dx &- c_1 \int_\Omega f\, |\varphi|^{\alpha-2} |\psi|^{\beta} \varphi \xi \, dx - C_1 \int_\Omega |\varphi|^{p-2} \varphi \xi \, dx\\
\label{sigm>inf2}
 &= \int_\Omega \left( 1 - c_1 f\, |\varphi|^{\alpha-2} |\psi|^{\beta} \varphi - C_1 |\varphi|^{p-2} \varphi \right) \xi \, dx \geq 0
\end{align}
for all $\xi \in \overline{S}$, since $\varphi, \psi, f$ are bounded and $|\varphi(x)|^{\alpha-2} |\psi(x)|^\beta \varphi(x) \to 0$ as $dist(x, \partial \Omega) \to 0$, due to $\alpha + \beta > 1$.
By a similar argument, there exists $C_2 \in \mathbb{R}$ such that for all $\eta \in \overline{S}$ we get
\begin{equation}
\label{sigm>inf3}
\int_\Omega \eta \, dx - c_2 \int_\Omega f\, |\varphi|^{\alpha-2} |\psi|^{\beta} \varphi \xi \, dx - C_2 \int_\Omega |\psi|^{q-2} \psi \eta \, dx \geq 0.
\end{equation}
Using now \eqref{sigm>inf1}, \eqref{sigm>inf2}, \eqref{sigm>inf3}, we conclude that
\begin{align*}
\lambda^*_e(r) \geq \inf_{\xi, \eta \in \overline{S}} 
\frac{ C_1 \int_\Omega |\varphi|^{p-2} \varphi \xi \, dx + C_2 \int_\Omega |\psi|^{q-2} \psi \eta \, dx}{ \int_\Omega |\varphi|^{p-2} \varphi \xi \, dx + r \int_\Omega |\psi|^{q-2} \psi \eta \, dx} \geq \min\{C_1, C_2 /r \} > -\infty
\end{align*}
for all $r>0$, where the penultimate inequality follows from Proposition \ref{lem:f}.

(2)
Assume $\Omega^+ \cup \Omega^0$ has nonempty interior a.e. Then one can find
an open set $B$ such that $B \subset \Omega^+ \cup \Omega^0$ after possible redefinition on a set of measure 0.
Let us fix any nontrivial nonnegative function $\varphi \in C^\infty_0(B)$ and consider $\varphi^{p}/u^{p-1}$ for each $u \in S$. Evidently, $\varphi^{p}/u^{p-1} \in C^1_0(\Omega)$, due to the necessary regularity of $\varphi, u$ and the facts that $u > 0$ in $\overline{B}$ and $\varphi \equiv 0$ in $\Omega \setminus B$.
Applying now Picone's identity (cf. \cite[Theorem 1.1]{Alleg}) for any $u \in S$  we get
\begin{equation*}
\int_\Omega |\nabla u|^{p-2} \nabla u \nabla \left( \frac{\varphi^{p}}{u^{p-1}} \right) \,dx 
\leq 
\int_\Omega |\nabla \varphi|^{p} \,dx \leq C_1 \int_\Omega |\varphi|^{p} \,dx,
\end{equation*}
where a constant $C_1 = C_1(\varphi)$ doesn't depend on $u$.
Hence, using the fact that $B \subset \Omega^+ \cup \Omega^0$ a.e., we get the following chain of inequalities:
\begin{align*}
\notag
\inf_{\xi, \eta \in \overline{S}} &\mathcal{L}_r(u, v; \xi, \eta) \\
\leq &\mathcal{L}_r \left(u, v; \frac{\varphi^{p}}{u^{p-1}}, 0 \right)
= \frac{ \int_\Omega |\nabla u|^{p-2} \nabla u \nabla \left( \frac{\varphi^{p}}{u^{p-1}} \right) \,dx  - c_1 \int_\Omega f |u|^{\alpha-p} |v|^\beta \varphi^{p} \,dx}
{\int_\Omega \varphi^{p} \,dx} \leq C_1 < +\infty
\end{align*}
for all $u, v \in S$ and $r \in [0, +\infty]$.
Consequently, we conclude that $\lambda^*_e(r) < +\infty$ on $[0, +\infty]$.

Likewise, we can find a constant $C_2 = C_2(\varphi)$, independent of $v$, such that for all $u, v \in S$ and $r \in [0, +\infty]$ it holds
\begin{align*}
\notag
r \, \inf_{\xi, \eta \in \overline{S}} &\mathcal{L}_r(u, v; \xi, \eta) \\
\leq r \, &\mathcal{L}_r \left(u, v; 0, \frac{\varphi^{q}}{v^{q-1}} \right)
= \frac{ \int_\Omega |\nabla v|^{q-2} \nabla v \nabla \left( \frac{\varphi^{q}}{v^{q-1}} \right) \,dx  - c_2 \int_\Omega f |u|^{\alpha} |v|^{\beta-q} \varphi^{q} \,dx}
{\int_\Omega \varphi^{q} \,dx} \leq C_2 < +\infty
\end{align*}
Hence, $\mu^*_e(r) < +\infty$ on $[0, +\infty]$, and statement $(2)$ of Theorem \ref{Th0} is proven.

(3)
Assume that $p=q$ and $\nu(\Omega^0 \cup \Omega^+) = 0$. Then  statements $(b)$ and $(e)$ of \cite[Theorem 1.1, p.~947]{ilrunSup} imply the existence of positive weak solution $u_\lambda \in C^{1, \delta}(\overline{\Omega})$ of the problem
\begin{equation}
\label{scalar}
\left\{
\begin{aligned}
  -\Delta_p u_\lambda &= \lambda |u_\lambda|^{p-2} u_\lambda + f(x) |u_\lambda|^{\alpha + \beta -2} u_\lambda , &&x \in \Omega, \\[0.4em]
 \left. u_\lambda \right|_{\partial \Omega} &=  0,
\end{aligned}
\right.
\end{equation}
for any $\lambda > \lambda_1$. At the same time, it is easy to see that $(u_\lambda, u_\lambda)$ becomes a positive $C^1$-solution of system \eqref{D1} with $p=q$, $\lambda = \mu$ and $c_1, c_2 = 1$, i.e.
\begin{equation*}
\left\{
\begin{aligned}
  -\Delta_p u_\lambda &= \lambda |u_\lambda|^{p-2} u_\lambda + f(x) |u_\lambda|^{\alpha-2} |u_\lambda|^{\beta}  u_\lambda , &&x \in \Omega, \\[0.4em]
 \, -\Delta_p u_\lambda &= \lambda |u_\lambda|^{p-2} u_\lambda + f(x) |u_\lambda|^{\alpha} |u_\lambda|^{\beta-2} u_\lambda, &&x \in \Omega, \\[0.4em]
 \left. u_\lambda \right|_{\partial \Omega} &= 0.
\end{aligned}
\right.
\end{equation*}
Proposition \ref{semivar} implies the existence of $t_0, s_0 > 0$, independent of $\lambda$, such that $(t_0 u_\lambda, s_0 u_\lambda)$ is a positive $C^1$-solution of \ref{D1}. Using $(t_0 u_\lambda, s_0 u_\lambda)$ as test point for $\mathcal{L}_r(\cdot, \cdot; \xi, \eta)$ we get
\begin{align*}
\lambda^*_e(r) \geq \inf_{\xi, \eta \in \overline{S}} \mathcal{L}_r(t_0 u_\lambda, s_0 u_\lambda; \xi, \eta) &=
\inf_{\xi, \eta \in \overline{S}} \frac{\lambda  \int_\Omega |t_0 u_\lambda|^{p-2} t_0 u_\lambda \xi \,dx + \lambda \int_\Omega |s_0 u_\lambda|^{p-2} s_0 u_\lambda \eta \,dx}{\int_\Omega |t_0 u_\lambda|^{p-2} t_0 u_\lambda \xi \,dx + r\, \int_\Omega |s_0 u_\lambda|^{p-2} s_0 u_\lambda \eta \,dx} \\
&\geq \lambda \,\min \left\{1, 1/r \right\} \to +\infty,
\end{align*}
as $\lambda \to +\infty$ for any $r>0$. Hence, $\lambda^*_e(r) = +\infty$ for all $r>0$, which completes the proof.

(4)
Let $r_0>0$ be such that $\lambda^*_f(r_0)>0$. Then the following set is nonempty 
\begin{equation}
\label{eq:S+}
\mathcal{S}^+_{r_0}:=\{(u,v)\in S\times S: \inf_{\xi, \eta \in \overline{S}} \mathcal{L}_{r_0}(u, v; \xi, \eta) \geq 0 \}.
\end{equation}
Observe that if $r \leq r_0$, then for all $(u,v) \in \mathcal{S}^+_{r_0}$ it holds 
\begin{equation*}
\inf_{\xi, \eta \in \overline{S}} \mathcal{L}_{r}(u, v; \xi, \eta) \geq
\inf_{\xi, \eta \in \overline{S}} \mathcal{L}_{r_0}(u, v; \xi, \eta).
\end{equation*}
Consequently,
\begin{equation}\label{sigma1}
\lambda^*_e(r) = \sup_{u, v \in S} \inf_{\xi, \eta \in \overline{S}} \mathcal{L}_{r}(u, v; \xi, \eta) \geq
\sup_{u, v \in S} \inf_{\xi, \eta \in \overline{S}} \mathcal{L}_{r_0}(u, v; \xi, \eta) = \lambda^*_e(r_0).	
\end{equation}
Thus, we have proved that if $\lambda^*_e(r_0 ) > 0$ for some $r_0>0$, then $\lambda^*_e(r) > 0$ for any $r \in (0,r_0]$.

If  now $r \geq r_0$, then  for any $(u,v) \in \mathcal{S}^+_{r_0}$ and $\xi, \eta \in \overline{S}$ it holds $\mathcal{L}_{r}(u, v; \xi, \eta) \,r  \geq 
\mathcal{L}_{r_0}(u, v; \xi, \eta) \, r_0$. Indeed,
\begin{align*}
&\mathcal{L}_{r}(u, v; \xi, \eta) \, r \\
&\frac{\int_\Omega |\nabla u|^{p-2} \nabla u \nabla \xi \, dx +  \int_\Omega |\nabla v|^{q-2} \nabla v \nabla \eta \, dx
- 
c_1 \int_\Omega f |u|^{\alpha-2} |v|^\beta u \xi \, dx -
c_2 \int_\Omega f |u|^{\alpha} |v|^{\beta-2} v \eta \, dx }
{\frac{1}{r}\int_\Omega |u|^{p-2} u \xi \, dx + \int_\Omega |v|^{q-2} v \eta \, dx}\\
&\geq
\frac{\int_\Omega |\nabla u|^{p-2} \nabla u \nabla \xi \, dx +  \int_\Omega |\nabla v|^{q-2} \nabla v \nabla \eta \, dx
- 
c_1 \int_\Omega f |u|^{\alpha-2} |v|^\beta u \xi \, dx -
c_2 \int_\Omega f |u|^{\alpha} |v|^{\beta-2} v \eta \, dx }
{\frac{1}{r_0}\int_\Omega |u|^{p-2} u \xi \, dx + \int_\Omega |v|^{q-2} v \eta \, dx} \\
&= \mathcal{L}_{r_0}(u, v; \xi, \eta) \, r_0.
\end{align*}
Thus, for all $r \geq r_0$ we find that
\begin{equation}\label{sigma2}
\mu^*_e(r) = \lambda^*_e(r) \, r = \sup_{u, v \in S} \inf_{\xi, \eta \in \overline{S}} \mathcal{L}_{r}(u, v; \xi, \eta) \, r \geq
\sup_{u, v \in S} \inf_{\xi, \eta \in \overline{S}} \mathcal{L}_{r_0}(u, v; \xi, \eta) \, r_0 = \lambda^*_e(r_0) \, r_0 = \mu^*_e(r_0).
\end{equation}
Consequently, if $\lambda^*_e(r_0) > 0$ for some $r_0>0$, then $\lambda^*_e(r) \geq \lambda^*_e(r_0)r_0/r>0$ for any $r\in [r_0, +\infty)$. Therefore, we have $\lambda^*_e(r)>0$ for all $r \in (0,\infty)$, and hence $\Gamma_e(r)>(0, 0)$.

(5) 
Let $r_1 > 0$ be such that  $\lambda^*_e(r_1) < +\infty$.  If $\mathcal{S}^+_r$ given by \eqref{eq:S+} is empty for any $r>0$, then evidently $\lambda^*_e(r) \leq 0$ for all $r>0$ and the assertion of the theorem is true.
Assume now that there exists $r_0 \neq r_1$, such that $\mathcal{S}^+_{r_0} \neq \emptyset$.
Then inequalities \eqref{sigma1} and \eqref{sigma2} imply that $\lambda^*_e(r_0) \leq \lambda^*_e(r_1) < +\infty$ or $\lambda^*_e(r_0) \leq \lambda^*_e(r_1) \, r_1/r_0 < +\infty$, respectively. 
Therefore, $\Gamma_e(r) < +\infty$ for all $r>0$, and the proof is complete.

(7)
Let $(0, 0) < \Gamma_e(r) < +\infty$, $r \in (0,\infty)$, then $\lambda^*_e(r)$ is nonincreasing and $\mu^*_e(r)$ is nondecreasing on $(0, +\infty)$, due to \eqref{sigma1} and \eqref{sigma2}, respectively.

(6)
The desired continuity of $\Gamma_e$ can be proved using the monotonicity \eqref{sigma1} and \eqref{sigma2} in much the same way as statement (2) of Theorem \ref{Th3}.
\qed

Observe that $(\overline{u}, \overline{v}) \in S\times S$ is a positive $C^1$-supersolution  of \eqref{D1} with $(\lambda, \mu)$
if and only if
\begin{equation*}
 \Phi_{(\lambda,\mu)}(\overline{u}, \overline{v}; \xi,\eta) \geq 0,  \quad \forall~ (\xi,\eta) \in \overline{S}\times \overline{S},
\end{equation*}
or, equivalently,
\begin{equation}
\label{C1solution}
\mathcal{L}_r(\overline{u}, \overline{v}; \xi, \eta) \geq \lambda, \quad \forall~ (\xi,\eta) \in \overline{S}\times \overline{S} \setminus \{(0, 0)\}, 
\end{equation}
where $r = \mu/\lambda$.

%Theorem \ref{Th1} is contained in the following main lemma.
%\begin{lemma}
%\label{lemm:prop}
%Assume $\alpha, \beta \geq 1$. Then
%\begin{enumerate}
%\item For any pair
%$(\lambda, \mu) \in \Sigma$ problem \eqref{D1} has a positive $C^1$-supersolution.
%\item For any pair
%$(\lambda, \mu) \in \mathcal{R}$ problem \eqref{D1} has no positive $C^1$-supersolutions, provided $\mathcal{R} \neq \emptyset$.
%\end{enumerate}
%\end{lemma}
%\begin{proof}

\medskip
\noindent
\textit{Proof of Theorem \ref{Th1}.}
%Note first that  if  $(\overline{u}, \overline{v})$ is a positive weak supersolution of \eqref{D1} with some $(\lambda_0,\mu_0) \in \mathbb{R}^2$, then it is also a positive weak  supersolution  of  \eqref{D1} for all $\lambda < \lambda_0, ~\mu < \mu_0$.
(1) Fix any $(\lambda, \mu) \in \Sigma^*_e$ and let $r_0 = \mu/\lambda$.
We will obtain the proof, if we show that \eqref{C1solution} holds for some $(\overline{u}, \overline{v}) \in S \times S$.
To this end, let us prove that $\lambda < \lambda^*(r_0)$ and  $\mu < \mu^*(r_0)$. Evidently, it is sufficient to check only the first inequality.
Suppose, contrary to our claim, that $\lambda \geq \lambda^*(r_0)$. Then $(\lambda, \mu) \in \Gamma_e$ if the equality holds, or $(\lambda, \mu) \in \mathcal{R}$ if $\lambda > \lambda^*(r_0)$. 
However, from above (see Subsection 2.2) we know that $\Sigma^*_e \cap \mathcal{R} = \emptyset$ and $\Gamma_e$ separates these sets. Hence, we get a contradiction to our assumption $(\lambda, \mu) \in \Sigma^*_e$.
Thus, the definition of \eqref{sf} implies the existence of $(\overline{u}, \overline{v}) \in S \times S$ such that 
$$
\lambda < \inf_{\xi, \eta \in \overline{S}} \mathcal{L}_{r_0}(\overline{u}, \overline{v}; \xi, \eta) \leq \lambda^*_e(r_0).
$$
Hence $\lambda < \mathcal{L}_{r_0}(\overline{u}, \overline{v}; \xi, \eta)$ for all $(\xi,\eta) \in \overline{S}\times \overline{S}$, and therefore $(\overline{u}, \overline{v})$ is a positive $C^1$-supersolution of \eqref{D1} with $(\lambda, \mu)$.

(2)
Let $\mathcal{R} \neq \emptyset$.
Suppose, contrary to our claim, that there exists a positive $C^1$-supersolution $(u, v)$ of \eqref{D1} for some $(\lambda, \mu) \in \mathcal{R}$. 
Arguing as in statement (1), it can be proved that $\lambda^*_e(r_0) < \lambda$ and $\mu^*_e(r_0) < \mu$ for $r_0 = \mu/\lambda$.
Hence, 
$$
\lambda^*_e(r_0) < \lambda \leq \inf_{\xi, \eta \in \overline{S}}\mathcal{L}_{r_0}(u, v; \xi, \eta) \leq \lambda^*_e(r_0),
$$
which is impossible.
\qed

\begin{remark}
Statement (1) of Theorem \ref{Th0} remains valid for\begin{equation*}
(\lambda, \mu) \in \bigcup_{r>0} \left\{ (\lambda, \mu) \in \mathbb{R}^2: ~\lambda < \lambda^*_e(r), ~\mu < \mu^*_e(r) \right\}.
\end{equation*}
\end{remark}

Let us now prove Lemma \ref{invar}. To reflect the dependence of the problem on the constants $c_1,c_2 \in \mathbb{R}^+$, we will temporarily use the notations \eqref{D1}$(c_1,c_2)$, $\Gamma_e(c_1, c_2)$, $\mathcal{R}(c_1,c_2)$, $\Sigma(c_1,c_2)$, etc. 

\noindent
\textit{Proof of Lemma \ref{invar}.}
Assume, contrary to our claim, there exists $r_0 > 0$ such that $\Gamma_e(c_1, c_2)(r_0) \neq \Gamma_e(d_1, d_2)(r_0)$ for some $c_1, c_2, d_1, d_2 > 0$. 
Since we use the parametrization of $\Gamma_e$ by rays $(\lambda, \lambda \, r_0)$, 
we may assume, without loss of generality, that  $\lambda_e^*(c_1, c_2)(r_0) < \lambda_e^*(d_1, d_2)(r_0)\leq +\infty$ and, consequently, $\mu_e^*(c_1, c_2)(r_0) < \mu_e^*(d_1, d_2)(r_0)\leq +\infty$. Statement (2) of Theorem \ref{Th1} implies that \eqref{D1}$(d_1,d_2)$ possesses a positive $C^1$-supersolution for any $(\lambda,\mu) $ such that $\lambda < \lambda_e^*(d_1, d_2)(r_0)$, $\mu < \mu_e^*(d_1, d_2)(r_0)$. Hence, Proposition \ref{semivar} yields that \eqref{D1}$(c_1,c_2)$ has also a positive $C^1$-supersolution for the same $(\lambda,\mu)$. However, by statement (1) of Theorem \ref{Th1}, problem
\eqref{D1}$(c_1,c_2)$ has no positive $C^1$-supersolutions for $\lambda>\lambda_e^*(c_1, c_2)(r_0)$ and $\mu>\mu_e^*(c_1, c_2)(r_0)$. This contradicts our assumption.
\qed

To prove the following fact let us note that $\overline{S}$ is the positive cone of the Banach space $\{w \in C^1(\overline{\Omega}):~w = 0 \mbox{ on } \partial \Omega \}$. 
Define an interior of $\overline{S}$ w.r.t. to $C^1$-topology as
\begin{equation}
\label{def:int}
\mathrm{int}\, \overline{S} := \left\{ w \in C^1(\overline{\Omega}):
~w>0 \text{ in } \Omega, 
~w = 0 \mbox{ on } \partial \Omega,
~\frac{\partial w}{\partial n} < 0 \text{ on } \partial\Omega \right\}, 
\end{equation}
where $n$ is the unit outward normal vector to $\partial \Omega$. 
Notice also that $\mathrm{int}\, \overline{S} \subset S$.

\begin{lemma}
\label{lem:solution}
%Assume that for some $r > 0$ there exists a maximizer $(u^*, v^*) \in \overline{S} \times \overline{S}$ of \eqref{extcrit}, i.e.
%$$
%\lambda^*_e(r) = \inf_{\xi, \eta \in \overline{S}} \mathcal{L}_r(u^*, v^*, \xi, \eta),
%$$
%and there exists a corresponding minimizer $(\xi^*, \eta^*) \in \mathrm{int}\, \overline{S} \times \mathrm{int}\, \overline{S}$, such that 
%%$\mathcal{L}_r(u^*, v^*, \xi^*, \eta^*) = \inf_{\xi, \eta \in \overline{S}} \mathcal{L}_r(u^*, v^*, \xi, \eta)$.
%$$
%\lambda^*_e(r) = \mathcal{L}_r(u^*, v^*, \xi^*, \eta^*) = \inf_{\xi, \eta \in \overline{S}} \mathcal{L}_r(u^*, v^*, \xi, \eta).
%$$
%Then $(u^*, v^*)$ is a weak solution of \eqref{D1}.
Assume that for some $r > 0$ there exist a maximizer $(u^*, v^*) \in \overline{S} \times \overline{S}$ and a corresponding minimizer $(\xi^*, \eta^*) \in \mathrm{int}\, \overline{S} \times \mathrm{int}\, \overline{S}$ of \eqref{extcrit}, i.e.
$$
\lambda^*_e(r) = \inf_{\xi, \eta \in \overline{S}} \mathcal{L}_r(u^*, v^*, \xi, \eta) = \mathcal{L}_r(u^*, v^*, \xi^*, \eta^*).
$$
Then $(u^*, v^*)$ is a weak solution of \eqref{D1}.
\end{lemma}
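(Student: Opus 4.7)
The key observation is that the saddle point $(u^*,v^*;\xi^*,\eta^*)$ lies in the interior of the cone $\overline{S}\times\overline{S}$ in the $\xi,\eta$ variables, so the minimization in $(\xi,\eta)$ can be treated as unconstrained, and the first-order optimality condition gives exactly the weak formulation \eqref{defin} of \eqref{D1} with $\lambda=\lambda^*_e(r)$ and $\mu=\lambda^*_e(r)\,r=\mu^*_e(r)$. Writing the quotient \eqref{Efm2} as $\mathcal{L}_r=N/D$, where $N$ denotes the numerator and $D$ the denominator, one has $\mathcal{L}_r(u^*,v^*;\xi^*,\eta^*)=\lambda^*_e(r)$, and the Fréchet differential in $(\xi,\eta)$ at $(\xi^*,\eta^*)$ is
$$
\frac{1}{D(\xi^*,\eta^*)}\bigl[\,dN(\xi^*,\eta^*)[\phi,\psi]\;-\;\lambda^*_e(r)\,dD(\xi^*,\eta^*)[\phi,\psi]\bigr].
$$

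\textbf{Step 1 (admissibility of arbitrary variations).} Since $\xi^*\in\mathrm{int}\,\overline{S}$ in the sense of \eqref{def:int}, for every $\phi\in C^1(\overline\Omega)$ with $\phi|_{\partial\Omega}=0$ there exists $\varepsilon>0$ such that $\xi^*+t\phi\in\overline{S}$ for all $|t|<\varepsilon$; similarly for $\eta^*$ and any $\psi$. Moreover, since $\xi^*,\eta^*>0$ in $\Omega$ and $(u^*,v^*)\neq(0,0)$, the denominator $D(u^*,v^*;\xi^*,\eta^*)$ is strictly positive, so $\mathcal{L}_r$ is a smooth function of $(\xi,\eta)$ in a neighbourhood of $(\xi^*,\eta^*)$.

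\textbf{Step 2 (Euler equations from vanishing differential).} As $(\xi^*,\eta^*)$ minimizes $\mathcal{L}_r(u^*,v^*;\cdot,\cdot)$ and all directions are admissible by Step 1, the differential must vanish separately in $\phi$ and in $\psi$. Since $N$ and $D$ are linear in each of $\xi,\eta$, their differentials are immediate, and after multiplying by $D(\xi^*,\eta^*)>0$ we obtain
\begin{align*}
\int_\Omega |\nabla u^*|^{p-2}\nabla u^*\nabla\phi\,dx - c_1\!\int_\Omega f\,|u^*|^{\alpha-2}|v^*|^{\beta}u^*\phi\,dx &= \lambda^*_e(r)\!\int_\Omega |u^*|^{p-2}u^*\phi\,dx,\\
\int_\Omega |\nabla v^*|^{q-2}\nabla v^*\nabla\psi\,dx - c_2\!\int_\Omega f\,|u^*|^{\alpha}|v^*|^{\beta-2}v^*\psi\,dx &= \mu^*_e(r)\!\int_\Omega |v^*|^{q-2}v^*\psi\,dx,
\end{align*}
valid for all admissible $\phi,\psi$.

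\textbf{Step 3 (density and conclusion).} The test functions obtained above are dense in $W^{1,p}_0$ and $W^{1,q}_0$ (this part is standard, cf. the remark after \eqref{defin}). Extending by density and using the integrability furnished by $u^*,v^*\in C^1(\overline\Omega)$ and $f\in L^\infty(\Omega)$, we recover precisely the weak formulation \eqref{defin} of \eqref{D1} with parameters $(\lambda,\mu)=(\lambda^*_e(r),\mu^*_e(r))$, and hence $(u^*,v^*)$ is a weak solution of \eqref{D1}.

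\textbf{Expected main obstacle.} The only nontrivial point is Step 1: one must genuinely use that $(\xi^*,\eta^*)\in\mathrm{int}\,\overline{S}\times\mathrm{int}\,\overline{S}$ rather than merely in $\overline{S}\times\overline{S}$. If the minimizer lay only on the boundary of $\overline{S}$, the variational argument would yield a variational inequality (giving only a supersolution) instead of an equality. The Hopf-type interior characterization \eqref{def:int} guarantees that $\xi^*+t\phi$ is still positive in $\Omega$ with strict inward-pointing normal derivative for small $|t|$, so every $\phi$ with $\phi|_{\partial\Omega}=0$ is an admissible direction; this is the crux of the proof.
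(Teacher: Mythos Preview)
Your proof is correct and follows essentially the same approach as the paper: both exploit that $(\xi^*,\eta^*)\in\mathrm{int}\,\overline{S}\times\mathrm{int}\,\overline{S}$ to conclude that the first-order conditions $D_\xi\mathcal{L}_r=D_\eta\mathcal{L}_r=0$ hold as equalities, and then observe that these are precisely the weak formulation of \eqref{D1} at $(\lambda^*_e(r),\mu^*_e(r))$. Your write-up is in fact more careful than the paper's, spelling out why interiority permits arbitrary two-sided variations and computing the quotient-rule differential explicitly, but the underlying argument is the same.
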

\begin{proof}
Since $\inf_{\xi, \eta \in \overline{S}} \mathcal{L}_r(u^*, v^*, \xi, \eta)$ is attained at $(\xi^*, \eta^*) \in \mathrm{int}\, \overline{S} \times \mathrm{int}\, \overline{S}$, we have
$$
D_\xi \mathcal{L}_r(u^*, v^*, \xi^*, \eta^*) = 0, \quad 
D_\eta \mathcal{L}_r(u^*, v^*, \xi^*, \eta^*) = 0,
$$
which is equivalent to
\begin{equation}
\label{solext}
  \begin{aligned}
\int_\Omega |\nabla u^*|^{p-2} \nabla u^* \nabla \xi \,dx - \lambda^*_e(r) \int_\Omega |u^*|^{p-2} u^* \xi \,dx - c_1 \int_\Omega f |u^*|^{\alpha-2} |v^*|^{\beta} u^* \xi \,dx &=  0,\\
\int_\Omega |\nabla v^*|^{q-2} \nabla v^* \nabla \eta \,dx- 
\mu^*_e(r) \int_\Omega |v^*|^{q-2} v^* \eta \,dx - c_2 \int_\Omega f |u^*|^{\alpha} |v^*|^{\beta-2} v^* \eta \,dx &=  0,
 \end{aligned}
\end{equation}
for all $\xi, \eta \in \overline{S}$.  Thus, we obtain the desired conclusion.
\end{proof}

From \eqref{solext} it follows that 
$\lambda^*_e(r) = \mathcal{L}_r(u^*, v^*, \xi, \eta)$
for all $\xi, \eta \in \overline{S}$. 
Thus, the existence of a minimizer $(\xi^*, \eta^*) \in \mathrm{int}\, \overline{S} \times \mathrm{int}\, \overline{S}$ implies that all $(\xi, \eta) \in \overline{S} \times \overline{S}$ are minimizers of \eqref{extcrit}.
On the other hand, in general, the functional
$\inf_{\xi, \eta \in \overline{S}} \mathcal{L}_r(u^*, v^*, \xi, \eta)$ is not differentiable with respect to $u^*$ and $v^*$. Therefore, we cannot conclude that 
$$
D_u \mathcal{L}_r(u^*, v^*, \xi, \eta) = 0, \quad 
D_v \mathcal{L}_r(u^*, v^*, \xi, \eta) = 0,
$$
for all $(\xi, \eta) \in \overline{S} \times \overline{S}$. 
However, it easy to see that at least in the case $p,q=2$, if these equalities hold for some $\xi^*, \eta^*$, then $\xi^*, \eta^*$ belong to the kernel of the corresponding linearized operator, i.e.
\begin{equation*}
\label{xxx}
\left\{
\begin{aligned}
  -\Delta \xi^*  &- \lambda^*_e(r) \xi^* - c_1 (\alpha-1) f(x) |u^*|^{\alpha-2} |v^*|^{\beta} \xi^* - c_1 \beta f(x) |u^*|^{\alpha-2} |v^*|^{\beta-2} u^* v^* \eta^* = 0, &&x \in \Omega, \\[0.4em]
  -\Delta \eta^* &- \mu^*_e(r) \eta^* - c_2 (\beta - 1) f(x) |u^*|^{\alpha} |v^*|^{\beta-2} \eta^* - c_2 \alpha f(x) |u^*|^{\alpha-2} |v^*|^{\beta-2} u^* v^* \xi^* = 0, &&x \in \Omega, \\[0.4em]
	\left. \xi^* \right|_{\partial \Omega} &=  \left. \eta^* \right|_{\partial \Omega} = 0.
\end{aligned} \right.
\end{equation*}

\medskip
Finally, we prove nonexistence results.

\par\noindent  
{\it Proof of Lemma \ref{Th27}}.

\par\noindent
(1) 
Assume $\nu(\Omega^+) = 0$. 
Suppose, contrary to our claim, that there exists a nontrivial weak solution $(u, v) \in W_0^{1,p} \times W_0^{1,q}$ of \eqref{D1} in $\mathbb{R}^2\setminus\{ \lambda \geq \lambda_1\}\times\{\mu \geq \mu_1 \}$. 
Without loss of generality, we can assume $\lambda < \lambda_1$.
Using this facts, we test the first equation in \eqref{D1} by $u$ and get 
\begin{align*}
0 < \int_\Omega |\nabla u|^{p} \,dx - \lambda \int_\Omega |u|^{p} \,dx = c_1 \int_\Omega f |u|^{\alpha} |v|^{\beta} \,dx \leq 0.
\end{align*}
Thus we obtain a contradiction.

\par\noindent
The proof of statements (2) and (3) directly follows from the proof of statement (2) of Theorem \ref{Th0}.

%Let $\alpha > p$. 
%In view of the assumptions $\alpha, \beta > 0$ and $\frac{\alpha}{p^*} + \frac{\beta}{q^*} < 1$, Proposition \ref{lem:l} implies $f(x) |v|^\beta \in L^{\infty}(\Omega)$.
%Since $\nu(\Omega^+ \cup \Omega^0) = \nu(\Omega)$, statements $(a)$ and $(c)$ of \cite[Theorem 1.1, p.~947]{ilrunSup} yield nonexistence of nonnegative weak solutions for any $\lambda > \lambda_1$, and consequently for any $(\lambda, \mu)$ from 
%\textcolor{blue}{
%quadrants III and IV. }
%The same proof works for the case $\beta > q$.
\qed

\appendix
\section{Additional properties}

We use the temporary notation \eqref{D1}$(c_1,c_2)$ to reflect the dependence of \eqref{D1} on parameters $c_1, c_2$.

\begin{proposition}
\label{semivar}
Let $p, q>1$, $\lambda, \mu, \alpha, \beta \in \mathbb{R}$ and $\frac{\alpha}{p} + \frac{\beta}{q} - 1 \neq 0$. If $(u, v) \in W_0^{1,p} \times W_0^{1,q}$ is a weak solution (sub-, supersolution) of \eqref{D1}$(c_1,c_2)$ with some $c_1, c_2 > 0$, then for any $d_1, d_2 > 0$ there exist $t, s \in (0, +\infty)$ such that 
$(t u, s v) \in W_0^{1,p} \times W_0^{1,q}$ is a weak solution (sub-, supersolution) of \eqref{D1}$(d_1,d_2)$.  
\end{proposition}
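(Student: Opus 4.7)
The plan is to find $t,s>0$ explicitly by imposing that the rescaled pair $(tu, sv)$ satisfy (in the distributional sense) the equations with coefficients $d_1,d_2$. Substituting $tu$ into the first equation and using the homogeneity $-\Delta_p(tu) = t^{p-1}(-\Delta_p u)$, we see that for $(tu,sv)$ to reproduce the structure of \eqref{D1}$(d_1,d_2)$ we need the coefficient identities
\begin{equation*}
c_1 \, t^{p-1} \;=\; d_1 \, t^{\alpha-1} s^{\beta}, \qquad c_2 \, s^{q-1} \;=\; d_2 \, t^{\alpha} s^{\beta-1},
\end{equation*}
(the $\lambda,\mu$-terms automatically match thanks to the same homogeneity). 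Taking logarithms, this is the linear system
\begin{equation*}
(\alpha-p)\ln t + \beta \ln s \;=\; \ln(c_1/d_1), \qquad \alpha \ln t + (\beta-q)\ln s \;=\; \ln(c_2/d_2),
\end{equation*}
whose determinant equals $(\alpha-p)(\beta-q)-\alpha\beta = -pq\bigl(\tfrac{\alpha}{p}+\tfrac{\beta}{q}-1\bigr)$. Under the assumption $\tfrac{\alpha}{p}+\tfrac{\beta}{q}\neq 1$ this determinant is nonzero, so the system admits a unique real solution $(\ln t,\ln s)$, hence a unique pair $t,s \in (0,+\infty)$.

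Having fixed these $t,s$, the weak solution case follows by multiplying the first equation in \eqref{defin} for $(u,v)$ by the positive scalar $t^{p-1}$ and using the identity $c_1 t^{p-1} = d_1 t^{\alpha-1}s^{\beta}$ to rewrite the nonlinear term; the resulting identity is precisely the first weak equation for $(tu,sv)$ with coefficient $d_1$. The second equation is treated analogously using $c_2 s^{q-1} = d_2 t^{\alpha}s^{\beta-1}$.

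For sub-/supersolutions the argument is the same: the test functions $\xi,\eta \geq 0$ are unchanged, and the multipliers $t^{p-1},s^{q-1}$ are strictly positive, so the sign of the inequality in \eqref{defin} is preserved. The membership $(tu,sv) \in W_0^{1,p}\times W_0^{1,q}$ is immediate from $t,s \in \mathbb{R}$. The only genuine obstacle is the degeneracy of the log-linear system; this is exactly what the hypothesis $\tfrac{\alpha}{p}+\tfrac{\beta}{q}\neq 1$ rules out, which is why it appears in the statement.
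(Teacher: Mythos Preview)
Your proof is correct and follows essentially the same route as the paper: both rescale by $(t,s)$, use the $(p-1)$- and $(q-1)$-homogeneity of the $p$- and $q$-Laplacians, and reduce the matching of coefficients to the same $2\times 2$ algebraic system, whose nondegeneracy is precisely $\frac{\alpha}{p}+\frac{\beta}{q}\neq 1$. The only cosmetic difference is that the paper writes down the explicit solution for $t,s$ while you argue via the determinant of the log-linear system; you also spell out the sub-/supersolution case (positivity of $t^{p-1},s^{q-1}$ preserves the inequalities), which the paper leaves implicit.
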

\begin{proof}
Let $(u, v) \in W_0^{1,p} \times W_0^{1,q}$ be a weak solution of \eqref{D1}$(c_1, c_2)$ with some $c_1, c_2 > 0$. 
Multiplying the first equation of \eqref{D1}$(c_1, c_2)$ by $t^{p-1}$ and the second equation by $s^{q-1}$, where $t, s > 0$, we get (in the weak sense)
\begin{equation*}
\left\{
\begin{aligned}
  -\Delta_p (t u) &= \lambda |t u|^{p-2} t u + c_1 \, t^{p - \alpha} s^{- \beta} f(x) |t u|^{\alpha-2} |s v|^{\beta}  t u, \\[0.4em]
 \, -\Delta_q (s v) &= \mu |s v|^{q-2} s v + c_2 \, t^{- \alpha} s^{q - \beta} f(x) |t u|^{\alpha} |s v|^{\beta-2} s v.
\end{aligned}
\right.
\end{equation*}
Let us fix any $d_1, d_2 > 0$ and find $t, s$ such that
$$
c_1 \, t^{p - \alpha} s^{- \beta} = d_1,
\quad 
c_2 \, t^{- \alpha} s^{q - \beta} = d_2.
$$
Then 
$$
t = \left( \frac{c_1}{d_1} \right)^{\frac{q-\beta}{p q d}} \left( \frac{c_2}{d_2} \right)^{\frac{\beta}{p q d}},
\quad  
s = \left( \frac{c_1}{d_1} \right)^{\frac{\alpha}{p q d}} \left( \frac{c_2}{d_2} \right)^{\frac{p - \alpha}{p q d}},
$$
where $d :=  \frac{\alpha}{p} + \frac{\beta}{q} - 1 \neq 0$, by the assumption. 
Hence $t, s \in (0, +\infty)$, since $c_1, c_2$, $d_1, d_2 > 0$, and $(t u, s v)$ satisfies \eqref{D1}$(d_1, d_2)$ in the weak sense.
The converse assertion can be proved by the similar way.
\end{proof}

Consider now the function
$$
g(x, y) := \frac{a x + b y}{c x + d y}, \quad x, y \geq 0, \quad (x,y) \neq (0,0),
$$
with $a,b \in \mathbb{R}$, $c, d > 0$.
\begin{proposition}
\label{lem:f}
$\min \{ \frac{a}{c}, \frac{b}{d}\} \leq g(x, y) \leq \max \{ \frac{a}{c}, \frac{b}{d}\}$ for all $x, y \geq 0$, $(x,y) \neq (0,0)$.
\end{proposition}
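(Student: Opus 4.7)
The plan is to reduce to the classical mediant inequality. Without loss of generality, I would assume $a/c \leq b/d$; the reverse case is symmetric (swap the roles of the pairs $(a,c)$ and $(b,d)$, equivalently relabel $(x,y)$). Under this assumption the claim becomes
$$\frac{a}{c} \leq \frac{ax+by}{cx+dy} \leq \frac{b}{d}.$$

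For the lower bound, I would observe that since $c,d>0$ and $x,y \geq 0$ with $(x,y)\neq (0,0)$, the denominator $cx+dy$ is strictly positive, so the inequality $(ax+by)/(cx+dy) \geq a/c$ is equivalent to $c(ax+by) \geq a(cx+dy)$, which simplifies to $y(bc-ad) \geq 0$. The assumption $a/c \leq b/d$ gives $ad \leq bc$ (using $cd>0$), hence $bc-ad \geq 0$, and combined with $y \geq 0$ this yields the inequality. The upper bound is handled in exactly the same way: $(ax+by)/(cx+dy) \leq b/d$ is equivalent to $x(ad-bc) \leq 0$, which again follows from $ad \leq bc$ and $x \geq 0$.

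There is no real obstacle; the only thing to watch is the degenerate cases $x=0$ (forcing $y>0$, giving $g=b/d$) and $y=0$ (forcing $x>0$, giving $g=a/c$), both of which lie trivially in $[\min\{a/c,b/d\},\max\{a/c,b/d\}]$. So the whole argument is a two-line algebraic manipulation once the symmetric reduction is made, and the positivity of $c,d$ (ensuring $cx+dy>0$ and preserving inequality directions when clearing denominators) is the only hypothesis that really gets used.
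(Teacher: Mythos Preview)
Your proof is correct. The paper takes a slightly different route: it assumes without loss of generality that $x\neq 0$, substitutes $s=y/x$ to reduce $g$ to the single-variable function $h(s)=(a+bs)/(c+ds)$ on $[0,+\infty)$, observes that $h$ is monotone, and reads off the extremal values from $h(0)=a/c$ and $\lim_{s\to+\infty}h(s)=b/d$. Your argument instead clears denominators directly and reduces each bound to the sign of $bc-ad$, which is the classical mediant inequality. Both are elementary; your approach avoids introducing the auxiliary one-variable function and any appeal to monotonicity, while the paper's approach makes the geometric picture (a monotone M\"obius-type function interpolating between the two ratios) a bit more visible. Either way the essential input is the same: positivity of $c,d$ and the sign of $bc-ad$.
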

\begin{proof}
Without loss of generality, we may suppose that $x \neq 0$. Consider the function
$$
h(s) := \frac{a + b s}{c + d s}, \quad s \geq 0.
$$
Evidently, $h(y/x) = f(x,y)$.
It is not hard to show that $h(s)$ is monotone for $s \geq 0$.
Therefore, the extremal values of $h$ on $[0, +\infty)$ will be achieved either for $s = 0$ or $s = +\infty$.
Finding the corresponding limits of $h(s)$ we obtain the desired result.
\end{proof}

\section{Regularity}

The next lemma provides the boundedness of weak solutions to \eqref{D1}.
This result is sufficient to obtain $C^{1}(\overline{\Omega})$-regularity of solutions and, in addition, the maximum principle for nonnegative ones.
The proof is based on the well-known \textit{bootstrap arguments} (cf. \cite[Lemma 3.2, p.~114]{drabekbook}).
\begin{lemma}
\label{lem:l}
Assume $\alpha, \beta > 0$, $\frac{\alpha}{p^*} + \frac{\beta}{q^*} < 1$ and $\lambda, \mu \in \mathbb{R}$. Let  $(u, v) \in W_0^{1,p} \times W_0^{1,q}$
be a weak solution  of \eqref{D1}. Then $(u, v) \in L^\infty(\Omega)\times L^\infty(\Omega)$.
\end{lemma}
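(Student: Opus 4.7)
The plan is to adapt the classical Moser/Serrin bootstrap iteration for scalar quasilinear equations (as in \cite[Lemma 3.2, p.~114]{drabekbook}) to the coupled setting, applying it simultaneously to both equations of \eqref{D1}. The starting point will be the Sobolev embeddings $u \in L^{p^*}$, $v \in L^{q^*}$ guaranteed by $(u,v) \in W_0^{1,p} \times W_0^{1,q}$, and the goal will be to show that for every $k \in \mathbb{N}$ one has $u \in L^{r_k}$, $v \in L^{s_k}$ with $r_k, s_k \to +\infty$ in a controlled geometric fashion, so that the standard Moser limit argument yields $u, v \in L^\infty(\Omega)$.

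Concretely, I would first fix $\gamma \geq 0$ and, for $M > 0$, introduce the truncations $u_M := \mathrm{sign}(u)\min(|u|, M)$, $v_M := \mathrm{sign}(v) \min(|v|, M)$. Testing the first equation of \eqref{D1} with $\varphi = u\,|u_M|^{\gamma p}$ (admissible since it lies in $W_0^{1,p} \cap L^\infty$) and using the Sobolev inequality on the gradient term on the left, I would obtain, after letting $M \to \infty$ by monotone convergence, an estimate of the schematic form
$$
\|u\|_{L^{(\gamma+1)p^*}}^{(\gamma+1)p} \leq C(\gamma)\left( |\lambda|\,\|u\|_{L^{(\gamma+1)p}}^{(\gamma+1)p} + \|f\|_\infty \int_\Omega |u|^{\alpha + \gamma p}|v|^\beta \, dx\right),
$$
and an analogous bound for $v$ with the roles of $(p,\alpha)$ and $(q,\beta)$ interchanged. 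The mixed term on the right I would estimate by Hölder's inequality, choosing conjugate exponents so as to distribute the integration between $u$ and $v$ proportionally to $\alpha/p^*$ and $\beta/q^*$.

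The crucial point, and where the hypothesis $\alpha/p^* + \beta/q^* < 1$ enters, is that this Hölder step leaves a positive residual exponent that can be assigned to a lower-order power of $u$ (or $v$), giving an inequality of the form
$$
\|u\|_{L^{(\gamma+1)p^*}} \leq C(\gamma)\bigl(1 + \|u\|_{L^{R(\gamma)}} + \|v\|_{L^{S(\gamma)}}\bigr),
$$
where $R(\gamma) < (\gamma+1)p^*$ and $S(\gamma) < (\gamma+1)q^*$ for every $\gamma \geq 0$ (this is precisely the subcriticality gap $1 - \alpha/p^* - \beta/q^* > 0$). Combined with the analogous inequality for $v$, this produces a coupled recursive scheme $(r_{k+1}, s_{k+1}) = (\theta_1 r_k, \theta_2 s_k)$ with $\theta_1, \theta_2 > 1$ independent of $k$, starting from $(r_0, s_0) = (p^*, q^*)$.

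The main obstacle will be the bookkeeping of the coupling: the right-hand side of the $u$-iterate depends on $\|v\|$ and vice versa, so one cannot decouple the two bootstraps. I would handle this by iterating both inequalities in parallel and tracking the pair $(r_k, s_k)$ jointly, verifying that the Hölder splitting can be performed consistently with the subcritical condition at every step. Once $(r_k, s_k) \to (+\infty, +\infty)$, the constants $C(\gamma_k)$ remain controlled in the Moser-style geometric iteration, and a standard limiting argument (taking logarithms and summing a convergent geometric series of exponents) passes from $L^r$-bounds for all finite $r$ to the desired $L^\infty$-bound for both $u$ and $v$.
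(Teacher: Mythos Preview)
Your overall strategy—truncate, test with $u\,|u_M|^{\gamma p}$, use Sobolev on the left and H\"older on the right, then iterate \`a la Moser—is exactly what the paper does. Where you diverge from the paper is in your belief that ``one cannot decouple the two bootstraps.'' This is not so, and the paper exploits it: when estimating the mixed term $\int_\Omega |u|^{kp+\alpha}|v|^\beta\,dx$, the paper places the entire factor $|v|^\beta$ into $L^{q^*/\beta}$ using only the initial Sobolev bound $v\in L^{q^*}$, and (for $\alpha>p$) the factor $|u|^{\alpha-p}$ into $L^{p^*/(\alpha-p)}$ using only $u\in L^{p^*}$. The subcritical hypothesis $\frac{\alpha}{p^*}+\frac{\beta}{q^*}<1$ is precisely what guarantees the existence of $t\in(p,p^*)$ with $\frac{p}{t}+\frac{\alpha-p}{p^*}+\frac{\beta}{q^*}\le 1$, so the remaining factor $|u|^{p(k+1)}$ goes into $L^{t/p}$. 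The upshot is an inequality
\[
\|u\|_{L^{p^*(k+1)}}\le C^{1/(k+1)}\,\|u\|_{L^{t(k+1)}}
\]
in which the constant $C$ depends on $\|v\|_{q^*}$ but not on $k$, and no improved integrability of $v$ is ever needed. The $u$-iteration is therefore completely self-contained, and the same argument run on the second equation gives $v\in L^\infty$ independently.

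Your coupled scheme would presumably also work, but it introduces a synchronization problem (the exponents $S(\gamma_k)$ required for $v$ in the $u$-step must stay below the exponents $s_k$ produced by the $v$-iteration, and vice versa) that you would have to resolve carefully, whereas the paper's decoupled argument sidesteps it entirely. The paper also treats the three cases $\alpha<p$, $\alpha=p$, $\alpha>p$ separately in the H\"older step, and handles sign-changing solutions at the end by splitting into positive and negative parts rather than through your signed truncation; these are minor cosmetic differences.
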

\begin{proof}
Let first $(u, v) \in W_0^{1,p} \times W_0^{1,q}$ be a \textit{nonnegative} weak solution of \eqref{D1}.
Define $u_M := \min\{ u, M \}$ in $\Omega$, where $M > 0$. Then $u_M^{k p + 1} \in W_0^{1,p}$ for any $M, k > 0$ and $p > 1$. Indeed,
\begin{align*}
&\int_\Omega \left|\nabla \left( u_M^{k p + 1} \right) \right|^p \,dx = (k p +1)^p \int_\Omega u_M^{k p^2}  \left|\nabla u_M \right|^p  \,dx \\
&=(k p +1)^p \int_{\{u \leq M \}} u^{k p^2}  \left|\nabla u \right|^p  \,dx \leq 
(k p +1)^p M^{k p^2} \int_\Omega |\nabla u|^p \,dx < +\infty.
\end{align*}
Testing the fist equation of \eqref{D1} by $u_M^{k p + 1} \in W_0^{1,p}$ we obtain
\begin{equation}
\label{123}
\int_\Omega |\nabla u|^{p-2} \nabla u \nabla \left( u_M^{k p + 1} \right) \,dx = 
\lambda \int_\Omega |u|^{p-2} u u_M^{k p + 1} \,dx + c_1 \int_\Omega f |u|^{\alpha-2} |v|^{\beta} u u_M^{k p + 1} \,dx.
\end{equation}
Using the Sobolev embedding theorem, for the first integral we have the following estimation:
\begin{align}
\notag
&\int_\Omega |\nabla u|^{p-2} \nabla u \nabla \left( u_M^{k p + 1} \right) \,dx = 
(k p + 1) \int_\Omega u_M^{k p} \, |\nabla u_M|^{p} \,dx \\
&= \frac{(k p + 1)}{(k + 1)^p} \int_\Omega \left|\nabla \left( u_M^{k + 1} \right) \right|^{p} \,dx \geq
\label{firstint}
C_1 \frac{(k p + 1)}{(k + 1)^p} \left( \int_\Omega \left| u_M^{k + 1} \right|^{p^*} \,dx \right)^{\frac{p}{p^*}}.
\end{align}
Note that $C_1 > 0$ is independent of $M$ and $k$.
If $\lambda > 0$, taking \textit{any} $t \in (p, p^*)$ and applying H\"older's inequality for the second integral in \eqref{123}, we get
\begin{equation}
\label{secondint}
\int_\Omega |u|^{p-2} u u_M^{k p + 1} \,dx \leq  \int_\Omega |u|^{p(k+1)} \,dx \leq
C_2 \left( \int_\Omega |u|^{t (k+1)}  \,dx \right)^{\frac{p}{t}}.
\end{equation}
If $\lambda \leq 0$, then we have $\lambda \int_\Omega |u|^{p-2} u u_M^{k p + 1} \,dx \leq 0$.

Since $f \in L^\infty(\Omega)$, the third integral in \eqref{123} can be estimated initially as follows:
\begin{equation}
\label{last3}
\int_\Omega f |u|^{\alpha-2} |v|^{\beta} u u_M^{k p + 1} \,dx \leq C_3 \int_\Omega |u|^{k p + \alpha} |v|^{\beta} \,dx.
\end{equation}
Suppose fist $\alpha = p$. 
Due to the subcriticial assumption $\frac{\alpha}{p^*} + \frac{\beta}{q^*} < 1$, there exists $t \in (p, p^*)$ such that $\frac{\alpha}{t} + \frac{\beta}{q^*} \leq 1$ and consequently $\frac{p}{t} + \frac{\beta}{q^*} \leq 1$. Therefore, applying the H\"older inequality to the right-hand side of \eqref{last3}, we derive
\begin{align}
\label{kpa0}
\int_\Omega |u|^{p (k+1)} |v|^{\beta} \,dx  \leq C_4
\left( \int_\Omega |u|^{t (k+1)} \,dx \right)^{\frac{p}{t}} 
\left( \int_\Omega |v|^{q^*} \,dx \right)^{\frac{\beta}{q^*}} \leq C_5 \left( \int_\Omega |u|^{t (k+1)} \,dx\right)^{\frac{p}{t}},
\end{align}
where $C_5$ depends on $v$, but does not depend on $k$ and $M$.

Suppose now $\alpha < p$. 
Using \eqref{kpa0}, the right-hand side of \eqref{last3} can be estimated as follows:
\begin{align}
\notag
\int_\Omega |u|^{k p + \alpha} |v|^{\beta} \,dx &= 
\int_{\{ u < 1 \}} |u|^{k p + \alpha} |v|^{\beta} \,dx + 
\int_{ \{ u \geq 1 \}} |u|^{k p + \alpha} |v|^{\beta} \,dx \\
&
\notag
\leq \int_{\{ u < 1 \}} |v|^{\beta} \,dx + 
\int_{ \{ u \geq 1 \}} |u|^{k p + p} |v|^{\beta} \,dx \\ 
\label{kpa1} 
&\leq
C_6 + C_5 \left( \int_\Omega |u|^{t (k+1)} \,dx \right)^{\frac{p}{t}} \leq
C_7 \left( \int_\Omega |u|^{t (k+1)} \,dx \right)^{\frac{p}{t}},
\end{align}
where the last inequality is obtained by estimation
$$
\int_\Omega |u|^{t (k+1)} \,dx \geq 
\int_{ \{ u \geq 1 \}} |u|^{t (k+1)} \,dx \geq
\nu(\{ u \geq 1 \}) = C_8 > 0.
$$
Notice that although $C_6$, $C_7$ depend on $u, v$, they are independent of $k$ and $M$.

Suppose finally $\alpha > p$. 
Note that
$$
\frac{\alpha}{p^*} + \frac{\beta}{q^*} < 1 ~ \Longrightarrow ~ \exists \,t \in (p, p^*): ~ 
\frac{\alpha}{p^*} + \frac{\beta}{q^*} -p \left( \frac{1}{p^*} - \frac{1}{t} \right) \leq 1
 ~ \Longrightarrow ~ 
\frac{\alpha - p}{p^*} + \frac{\beta}{q^*} + \frac{p}{t} \leq 1.
$$
Therefore, using the H\"older inequality, we get
\begin{align}
\notag
&\int_\Omega |u|^{k p + \alpha} |v|^{\beta} \,dx  = \int_\Omega |u|^{p (k+1)} |u|^{\alpha - p} |v|^{\beta} \,dx  \\
\label{kpa}
&\leq 
C_9 \left( \int_\Omega |u|^{t (k+1)} \,dx \right)^{\frac{p}{t}} 
\left( \int_\Omega |u|^{p^*} \,dx \right)^{\frac{\alpha - p}{p^*}}
\left( \int_\Omega |v|^{q^*} \,dx \right)^{\frac{\beta}{q^*}} \leq
C_{10} \left( \int_\Omega |u|^{t (k+1)} \,dx \right)^{\frac{p}{t}}.
\end{align}
Combining  \eqref{last3} with \eqref{kpa0}, \eqref{kpa1} or \eqref{kpa}, we derive
\begin{equation}
\label{fuv}
\int_\Omega f |u|^{\alpha-2} |v|^{\beta} u_M^{k p + 1} \,dx \leq C_{11} \left( \int_\Omega |u|^{t (k+1)} \,dx \right)^{\frac{p}{t}},
\end{equation}
where constant $C_{11} \in (0, +\infty)$ depends on $u$ and $v$, but does not depend on $k$ and $M$.

Substituting estimations \eqref{firstint}, \eqref{secondint} and \eqref{fuv} into the  energy equation \eqref{123}, we get
\begin{equation*}
\left( \int_\Omega \left| u_M \right|^{p^* (k+1)} \,dx \right)^{\frac{1}{p^* (k+1)}} \leq 
C_{12}^{\frac{1}{k+1}} \left( \frac{(k + 1)}{(k p + 1)^{1/p}} \right)^{\frac{1}{k+1}} \left( \int_\Omega |u|^{t (k+1)} \,dx \right)^{\frac{1}{t (k+1)}}.
\end{equation*}
Taking $k_1 > 0$ such that $t (k_1 + 1) = p^*$  and passing to the limit as $M \to +\infty$, we obtain
\begin{equation}
\label{first}
\left( \int_\Omega \left| u \right|^{p^* (k_1 + 1)} \,dx \right)^{\frac{1}{p^* (k_1 + 1)}} \leq 
C_{12}^{\frac{1}{k_1 + 1}} \left( \frac{(k_1 + 1)}{(k_1 p + 1)^{1/p}} \right)^{\frac{1}{k_1 + 1}} \left( \int_\Omega |u|^{p^*} \,dx \right)^{\frac{1}{p^*}} < +\infty.
\end{equation}
Therefore, $u \in L^{p^*(k_1+1)}(\Omega)$. Organizing now the iterative process exactly as in   \cite[Lemma 3.2, p.~114]{drabekbook}, we conclude that $u \in L^\infty(\Omega)$.
The same reasoning is applied to show that $v \in L^\infty(\Omega)$. Hence, the nonnegative solutions of \eqref{D1} are bounded. To prove the boundedness of nodal solutions we apply the above arguments separately to positive and negative parts. It is possible, due to the fact that if $u \in W_0^{1,p}(\Omega)$, then $\max\{ \pm u, 0 \} \in W_0^{1,p}(\Omega)$, see \cite[Corollary A.5, p.~54]{Stamm}.
\end{proof}

\begin{corollary}
\label{rem:c} 
Assume $\alpha, \beta \geq 1$. Let  $(u, v) \in W_0^{1,p} \times W_0^{1,q}$
be a bounded weak solution  of \eqref{D1}. Then $u, v \in C^{1,\gamma}(\overline{\Omega})$, $\gamma \in (0,1)$. 
\end{corollary}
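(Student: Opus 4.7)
The plan is to reduce the problem to a scalar quasilinear equation with bounded right-hand side and then invoke the classical boundary regularity result of Lieberman \cite{Lieberman}.

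First I would check that for a bounded weak solution $(u,v)$ the right-hand sides of the two equations in \eqref{D1} lie in $L^\infty(\Omega)$. The term $\lambda |u|^{p-2}u = \lambda |u|^{p-1}\mathrm{sign}(u)$ is clearly bounded by $|\lambda| \|u\|_{L^\infty}^{p-1}$. For the indefinite term, rewrite $c_1 f(x)|u|^{\alpha-2}|v|^{\beta}u = c_1 f(x) |u|^{\alpha-1}\mathrm{sign}(u)\,|v|^{\beta}$. Since $\alpha,\beta\geq 1$ the exponents $\alpha-1$ and $\beta$ are nonnegative, so the map $s\mapsto |s|^{\alpha-1}$ (resp.\ $|s|^{\beta}$) is continuous at $0$ and grows at most polynomially; combined with $u,v\in L^\infty(\Omega)$ and $f\in L^\infty(\Omega)$ this yields
\[
\bigl\|c_1 f(x)|u|^{\alpha-2}|v|^{\beta}u\bigr\|_{L^\infty}\leq c_1 \|f\|_{L^\infty}\|u\|_{L^\infty}^{\alpha-1}\|v\|_{L^\infty}^{\beta}.
\]
Hence the whole right-hand side of the first equation defines some $g_1\in L^\infty(\Omega)$, and analogously $g_2\in L^\infty(\Omega)$ for the second equation.

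Next, the pair $(u,v)$ solves the \emph{decoupled} scalar Dirichlet problems
\[
-\Delta_p u = g_1(x)\ \text{in }\Omega,\quad u|_{\partial\Omega}=0,\qquad -\Delta_q v = g_2(x)\ \text{in }\Omega,\quad v|_{\partial\Omega}=0,
\]
in the weak sense. Since $\partial\Omega$ is of class $C^{1,\delta}$ and $g_1,g_2\in L^\infty(\Omega)$, the global $C^{1,\gamma}$-regularity theorem of Lieberman \cite{Lieberman} for weak solutions of quasilinear equations of $p$-Laplace type with bounded data applies, yielding $u\in C^{1,\gamma_1}(\overline\Omega)$ and $v\in C^{1,\gamma_2}(\overline\Omega)$ for some $\gamma_1,\gamma_2\in(0,1)$. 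Taking $\gamma=\min\{\gamma_1,\gamma_2\}$ finishes the proof.

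I do not expect any real obstacle here: the only subtlety is justifying that the nonlinear coupling term is bounded when $\alpha=1$ or $\beta=1$, where the expression $|u|^{\alpha-2}u$ formally involves a negative power; but the identity $|u|^{\alpha-2}u=|u|^{\alpha-1}\mathrm{sign}(u)$ together with the hypothesis $\alpha\geq 1$ eliminates this issue. All the machinery (boundedness from Lemma \ref{lem:l} is not needed in the statement because boundedness is assumed; the $C^{1,\delta}$-regularity of $\partial\Omega$ is given in the paper's standing assumptions) is already in place, so the argument is essentially a direct citation of \cite{Lieberman}.
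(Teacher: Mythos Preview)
Your proposal is correct and follows exactly the paper's approach: show that $\alpha,\beta\geq 1$ together with $u,v\in L^\infty$ and $f\in L^\infty$ makes the right-hand sides bounded, then apply Lieberman's regularity result \cite{Lieberman}. The paper's proof is simply a two-sentence version of what you wrote, so your expansion (including the remark on $|u|^{\alpha-2}u=|u|^{\alpha-1}\mathrm{sign}(u)$) is a faithful and more detailed rendering of the same argument.
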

\begin{proof}
The assumption $\alpha, \beta \geq 1$ and the boundedness of $(u,v)$ imply that the right-hand side of \eqref{D1} is bounded. Therefore, the regularity result of \cite{Lieberman} implies $u, v \in C^{1,\gamma}(\overline{\Omega})$ for some $\gamma \in (0,1)$.
\end{proof}

Corollary \ref{rem:c} and the strong maximum principle \cite{vazquez1984} imply
\begin{corollary}
\label{rem:>0}
Assume that $\alpha \geq p$, $\beta \geq q$ and let $(u, v) \in W_0^{1,p} \times W_0^{1,q}$ be a nontrivial nonnegative bounded weak solution of \eqref{D1}. Then $u$ and $v$ are \textit{positive} in $\Omega$. Moreover, they satisfy a boundary point maximum principle on $\partial \Omega$.
\end{corollary}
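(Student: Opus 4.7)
The plan is to reduce each of the two scalar equations in \eqref{D1} to a differential inequality of the form
$$
-\Delta_p u + C u^{p-1} \geq 0 \quad \text{in } \Omega
$$
(respectively for $v$ with $q$ in place of $p$), and then apply Vazquez's strong maximum principle \cite{vazquez1984}, which simultaneously yields strict positivity inside $\Omega$ and the boundary point statement $\partial u/\partial n < 0$ on $\partial\Omega$.

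First I would invoke Corollary \ref{rem:c}, which applies since $\alpha, \beta \geq 1$ and $(u,v)$ is bounded, to guarantee $u, v \in C^{1,\gamma}(\overline{\Omega})$, so that the gradient is well-defined up to the boundary. Next, using the boundedness of $u, v$ and $f$ together with the crucial hypothesis $\alpha \geq p$, I would estimate the right-hand side of the first equation of \eqref{D1} by
$$
\lambda |u|^{p-2} u + c_1 f(x) |u|^{\alpha-2} |v|^\beta u \leq \bigl(|\lambda| + c_1 \|f\|_\infty \|u\|_\infty^{\alpha-p} \|v\|_\infty^{\beta}\bigr) u^{p-1} =: C u^{p-1}
$$
a.e.\ in $\Omega$. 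Here the power $u^{\alpha-p}$ is bounded precisely because $\alpha \geq p$ and $u \in L^\infty(\Omega)$. Consequently $u$ satisfies the weak differential inequality $-\Delta_p u + C u^{p-1} \geq 0$ in $\Omega$, and $u \not\equiv 0$ since $(u,v)$ is nontrivial.

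Now I would invoke Vazquez's strong maximum principle: for $\beta(s) = C s^{p-1}$, the integrability condition $\int_0^{1} \bigl(s\,\beta(s)\bigr)^{-1/p}\,ds = C^{-1/p} \int_0^1 s^{-1}\,ds = +\infty$ holds, so the principle applies and produces both $u > 0$ in $\Omega$ and $\partial u/\partial n < 0$ on $\partial \Omega$. An entirely analogous argument, using $\beta \geq q$ instead of $\alpha \geq p$ to bound $v^{\beta-q}$ pointwise, gives $v > 0$ in $\Omega$ and $\partial v/\partial n < 0$ on $\partial \Omega$. No real obstacle is expected here; the only delicate point is that the exponent conditions $\alpha \geq p$ and $\beta \geq q$ are needed in order to absorb the cross-term $|u|^{\alpha-2}|v|^\beta u$ (resp.\ $|u|^\alpha|v|^{\beta-2}v$) into a constant multiple of $u^{p-1}$ (resp.\ $v^{q-1}$), which is what makes Vazquez's framework applicable.
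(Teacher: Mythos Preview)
Your approach is exactly the one the paper intends: invoke Corollary~\ref{rem:c} for $C^{1,\gamma}$ regularity, absorb the nonlinear term into a multiple of $u^{p-1}$ using $\alpha\ge p$ (and symmetrically for $v$ with $\beta\ge q$), and then apply V\'azquez's strong maximum principle \cite{vazquez1984}. The paper compresses all of this into a single sentence, so your expansion is faithful.

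One small slip: the displayed inequality should be a \emph{lower} bound, not an upper bound. To obtain $-\Delta_p u + C u^{p-1}\ge 0$ you need $-\Delta_p u \ge -C u^{p-1}$, i.e.
\[
\lambda u^{p-1} + c_1 f(x)\,u^{\alpha-1}|v|^{\beta}
\;\ge\; -\bigl(|\lambda| + c_1\|f\|_\infty \|u\|_\infty^{\alpha-p}\|v\|_\infty^{\beta}\bigr)u^{p-1}
\;=\; -C u^{p-1},
\]
which follows from the same pointwise estimates you wrote, just with the signs reversed. Your stated conclusion is correct; only the intermediate inequality sign needs flipping.
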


%\bibliographystyle{acm}
%\bibliography{bibliography}

\begin{thebibliography}{10}

\bibitem{Alam}
{\sc Alama, S., and Tarantello, G.}
\newblock On semilinear elliptic equations with indefinite nonlinearities.
\newblock {\em Calculus of Variations and Partial Differential Equations 1}, 4
  (1993), 439--475.

\bibitem{Alleg}
{\sc Allegretto, W., and Huang, Y.~X.}
\newblock A picone's identity for the $p$-laplacian and applications.
\newblock {\em Nonlinear Analysis: Theory, Methods \& Applications 32}, 7
  (1998), 819--830.

\bibitem{Anan}
{\sc Anane, A.}
\newblock {\em {Etude des valeurs propres et de la r\'esonance pour
  l'op\'erateur $p$-Laplacien}}.
\newblock PhD thesis, Th\'ese de doctorat, ULB, 1987.

\bibitem{BobkovIlyasov}
{\sc Bobkov, V., and Il'yasov, Y.}
\newblock Asymptotic behaviour of branches for ground states of elliptic
  systems.
\newblock {\em Electronic Journal of Differential Equations 2013}, 212 (2013),
  1--21.

\bibitem{Bozhkov}
{\sc Bozhkov, Y., and Mitidieri, E.}
\newblock Existence of multiple solutions for quasilinear systems via fibering
  method.
\newblock {\em Journal of Differential Equations 190}, 1 (2003), 239--267.

\bibitem{diaz}
{\sc D\'{\i}az, J.~I.}
\newblock {\em {Nonlinear partial differential equations and free boundaries}}.
\newblock Pitman London, 1985.

%\bibitem{DrabekT}
%{\sc Dr\'{a}bek, P., Krejc\'{\i}, P., and Tak\'{a}c, P.}
%\newblock {\em {Nonlinear differential equations}}.
%\newblock Chapman and Hall/CRC, 1999.

\bibitem{drabekbook}
{\sc Dr\'{a}bek, P., Kufner, A., and Nicolosi, F.}
\newblock {\em {Quasilinear elliptic equations with degenerations and
  singularities}}, vol.~5.
\newblock Walter de Gruyter, 1997.

%\bibitem{ilcomp}
%{\sc Il'yasov, Y.}
%\newblock {On positive solutions of indefinite elliptic equations}.
%\newblock {\em Comptes Rendus de l'Acad\'{e}mie des Sciences-Series
%  I-Mathematics 333}, 6 (2001), 533--538.

%\bibitem{Il1}
%{\sc Il'yasov, Y.}
%\newblock {On nonlocal existence results for elliptic equations with
%  convex-concave nonlinearities}.
%\newblock {\em Nonlinear Analysis: Theory, Methods \& Applications 61}, 1
%  (2005), 211--236.

\bibitem{IlD}
{\sc Il'yasov, Y.}
\newblock {A duality principle corresponding to the parabolic equations}.
\newblock {\em Physica D: Nonlinear Phenomena 237}, 5 (2008), 692--698.

\bibitem{ilrunSup}
{\sc Il'yasov, Y., and Runst, T.}
\newblock {Positive solutions of indefinite equations with $p$-Laplacian and
  supercritical nonlinearity}.
\newblock {\em Complex Variables and Elliptic Equations 56}, 10-11 (2011),
  945--954.

\bibitem{ilIZV}
{\sc Il'yasov, Y.~S.}
\newblock {Non-local investigation of bifurcations of solutions of non-linear
  elliptic equations}.
\newblock {\em Izvestiya: Mathematics 66}, 6 (2002), 1103.

%\bibitem{ilDif}
%{\sc Il'yasov, Y.~S.}
%\newblock {On global positive solutions of parabolic equations with a
%  sign-indefinite nonlinearity}.
%\newblock {\em Differential Equations 41}, 4 (2005), 548--556.

\bibitem{ilfunc}
{\sc Il'yasov, Y.~S.}
\newblock {Bifurcation calculus by the extended functional method}.
\newblock {\em Functional Analysis and Its Applications 41}, 1 (2007), 18--30.

\bibitem{IvaIly}
{\sc Ivanov, A.~A., and Il'yasov, Y.~S.}
\newblock {Finding bifurcations for solutions of nonlinear equations by
  quadratic programming methods}.
\newblock {\em Zhurnal Vychislitel'noi Matematiki i Matematicheskoi Fiziki 53},
  3 (2013), 350--364.

\bibitem{Stamm}
{\sc Kinderlehrer, D., and Stampacchia, G.}
\newblock {\em An introduction to variational inequalities and their
  applications}, vol.~31.
\newblock SIAM, 2000.

\bibitem{Klenke}
{\sc Klenke, A.}
\newblock {\em Probability Theory: A Comprehensive Course, 2nd ed.}
\newblock Springer, 2014.

\bibitem{LefWei}
{\sc Lefton, L., and Wei, D.}
\newblock Numerical approximation of the first eigenpair of the $p$-laplacian
  using finite elements and the penalty method.
\newblock {\em Numerical Functional Analysis and Optimization 18}, 3-4 (1997),
  389--399.

\bibitem{Lieberman}
{\sc Lieberman, G.~M.}
\newblock {Boundary regularity for solutions of degenerate elliptic equations}.
\newblock {\em Nonlinear Analysis: Theory, Methods \& Applications 12}, 11
  (1988), 1203--1219.

%\bibitem{lindqvist}
%{\sc Lindqvist, P.}
%\newblock On the equation $\text{div}(|\nabla u|^{p-2} \nabla u)+\lambda
%  |u|^{p-2} u =0$.
%\newblock {\em Proceedings of the American Mathematical Society 109}, 1 (1990),
%  157--164.

\bibitem{Ou2}
{\sc Ouyang, T.}
\newblock {On the positive solutions of semilinear equations $\Delta u+\lambda
  u+h u^p=0$ on compact manifolds, Part II}.
\newblock {\em Indiana University Mathematics Journal 40}, 3 (1991),
  1083--1141.

\bibitem{Sattinger}
{\sc Sattinger, D.~H.}
\newblock Monotone methods in nonlinear elliptic and parabolic boundary value
  problems.
\newblock {\em Indiana University Mathematics Journal 21}, 11 (1972),
  979--1000.

\bibitem{vazquez1984}
{\sc V\'{a}zquez, J.~L.}
\newblock {A strong maximum principle for some quasilinear elliptic equations}.
\newblock {\em Applied Mathematics and Optimization 12}, 1 (1984), 191--202.

\bibitem{yang2008}
{\sc Yang, G., and Wang, M.}
\newblock Existence of multiple positive solutions for a $p$-laplacian system
  with sign-changing weight functions.
\newblock {\em Computers \& Mathematics with Applications 55}, 4 (2008),
  636--653.

\bibitem{yin}
{\sc Yin, H.-M.}
\newblock On a $p$-laplacian type of evolution system and applications to the
  bean model in the type-II superconductivity theory.
\newblock {\em Quarterly of Applied Mathematics LIX}, 1 (2001), 47--66.
%\newblock {\em arXiv preprint math/9801140\/} (1998).

\end{thebibliography}

\end{document}